\documentclass[11pt]{article}

\usepackage[margin=1in]{geometry}
\usepackage{amsmath,amssymb,amsthm,mathtools}
\usepackage{mathrsfs}
\usepackage{hyperref}
\usepackage{xcolor}
\usepackage{enumitem}

\hypersetup{
	colorlinks=true,
	linkcolor=blue!60!black,
	citecolor=blue!60!black,
	urlcolor=blue!60!black
}

\newtheorem{theorem}{Theorem}[section]
\newtheorem{proposition}[theorem]{Proposition}
\newtheorem{lemma}[theorem]{Lemma}
\newtheorem{corollary}[theorem]{Corollary}
\theoremstyle{remark}
\newtheorem{remark}[theorem]{Remark}

\newcommand{\Mn}{\mathbb M_n}

\newcommand{\Pn}{\mathbb P_n}
\newcommand{\tr}{\operatorname{Tr}}
\newcommand{\diag}{\operatorname{diag}}
\newcommand{\rank}{\operatorname{rank}}

\newcommand{\one}{\mathbf 1}
\newcommand{\ui}[1]{\left\lVert #1 \right\rVert_{\mathrm{ui}}}

\newcommand{\pospart}[1]{\left(#1\right)_+}

\title{Heron-Wasserstein majorization inequalities for spectral and Kubo-Ando geometric means}

\author{
	Trung Dung Vuong\thanks{Department of Mathematics, High School for the Gifted, Vietnam National University Ho Chi Minh City, Ho Chi Minh City, Vietnam. Email: \href{mailto:vtdung@ptnk.edu.vn}{\texttt{vtdung@ptnk.edu.vn}}.}
	\and
		Anh Thi Nguyen \thanks{Faculty of Mathematics and Computer Science, University of Science, Vietnam National University Ho Chi Minh City, Ho Chi Minh City, Vietnam. Email: \href{mailto:vtdung@ptnk.edu.vn}{\texttt{nathi@hcmus.edu.vn}}.}
	\and
		Trung Hoa Dinh\thanks{Department of Mathematics and Statistics, Troy University, Troy, Alabama 36079, USA. Email: \href{mailto:thdinh@troy.edu}{\texttt{thdinh@troy.edu}}.}
}

\date{May 2026}

\begin{document}
\maketitle
 
\begin{abstract}
	We prove sharp Heron-type majorization inequalities for two quadratic matrix
	expressions associated with the spectral and Kubo-Ando geometric means.  For
	the spectral geometric mean cross term, we show that
	\[
	\lambda\bigl(a^2A+b^2B+c(A\natural B)\bigr)
	\prec_w
	\lambda\bigl(W_{a,b}(A,B)\bigr),
	\qquad 0\le c\le 2ab,
	\]
	where \(W_{a,b}(A,B)\) is the weighted Bures-Wasserstein expression.  The
	coefficient \(2ab\) is sharp, and at this endpoint the weak majorization
	becomes majorization.
	
	For the Kubo-Ando geometric mean, we prove the direct comparison
	\[
	\lambda\bigl(a^2A+b^2B+2ab(A\#B)\bigr)
	\prec_w
	\lambda\bigl(W_{a,b}(A,B)\bigr).
	\]
	This settles, in the two-variable setting, Bhatia's question of whether the
	Heron-type norm inequality of Bhatia-Lim-Yamazaki admits a
	weak-majorization refinement.  More precisely, we prove
	\[
	\lambda\bigl(a^2A+b^2B+2ab(A\#B)\bigr)
	\prec_w
	\lambda\bigl((aA^{1/2}+bB^{1/2})^2\bigr),
	\]
	and consequently obtain the corresponding inequality for all unitarily
	invariant norms.
\end{abstract}
\medskip
\noindent\textbf{Mathematics Subject Classification (2020).}
Primary 15A42;  15A45; 15A60; 47A64.

\medskip
\noindent\textbf{Keywords.}
  geometric mean; spectral geometric mean;
Bures-Wasserstein mean; Heron mean; majorization; 
  unitarily invariant norm
 
 \section{Introduction}
 \label{sec:introduction}
 Let \(\Pn\) denote the cone of \(n\times n\) complex positive definite matrices.
 For \(A,B\in\Pn\) and \(0\le t\le1\), the weighted Kubo-Ando geometric mean is
 \[
 A\#_tB
 =
 A^{1/2}(A^{-1/2}BA^{-1/2})^tA^{1/2}.
 \]
 Another noncommutative extension of the scalar geometric mean is the spectral
 geometric mean.  In its weighted form, it is given by
 \[
 A\natural_t B
 =
 (A^{-1}\#B)^t A (A^{-1}\#B)^t.
 \]
 The midpoint cases are
 \[
 A\#B=A\#_{1/2}B,
 \qquad
 A\natural B=A\natural_{1/2}B
 =(A^{-1}\#B)^{1/2}A(A^{-1}\#B)^{1/2}.
 \]
 The Kubo-Ando mean is the metric geometric mean in the theory of operator means
 \cite{Bhatia2007,KuboAndo1980}; the spectral mean was introduced by Fiedler and
 Ptak \cite{FiedlerPtak1997} and has been developed further in connection with
 spectral and log-majorization relations \cite{GanLiuTam2021,LeeLim2007}.
 Related weighted spectral and \(F\)-type means, including the family
 \[
 F_t(A,B)
 =
 (A^{-1}\#_tB)^{1/2}A^{2-2t}(A^{-1}\#_tB)^{1/2},
 \]
 have also been studied recently
 \cite{DinhTamVuong2024,GanKimMer2026,JeongKimTam2025}.
 
 The present paper does not study the full weighted families.  Instead, it
 focuses on the midpoint cross terms \(A\#B\) and \(A\natural B\) inside
 quadratic Heron-type expressions.  The comparison object is the weighted
 Bures-Wasserstein expression
 \[
 W_{a,b}(A,B)
 =
 a^2A+b^2B+ab\bigl((AB)^{1/2}+(BA)^{1/2}\bigr),
 \qquad a,b\ge0.
 \]
 For \(a=1-t\) and \(b=t\), this is the two-variable Bures-Wasserstein geodesic
 point
 \[
 A\diamond_tB
 =
 (1-t)^2A+t^2B
 +t(1-t)\bigl((AB)^{1/2}+(BA)^{1/2}\bigr),
 \]
 which is related to the Bures-Wasserstein geometry of positive definite
 matrices \cite{BhatiaJainLimDistance2019,BhatiaJainLimIneq2019}.
 
The question studied here is therefore additive and Heron-type: how do the
quadratic expressions obtained by inserting \(A\natural B\) or \(A\#B\) as the
cross term compare with the corresponding Bures-Wasserstein expression?  The
inequalities below have sharp coefficients and are not formal consequences of
the known order, near-order, or log-majorization comparisons for Wasserstein and
spectral means \cite{DumitruFranco2024,GanHuang2024,GanKim2024}. 
 
This additive nature is essential: adding the common term \(a^2A+b^2B\) is not
compatible with multiplicative or order-type comparisons.  Thus the Heron-type
inequalities below require arguments different from the known comparisons
between Wasserstein and spectral means.

With this notation, our first main result is the sharp spectral-Heron comparison
\begin{equation}
	\label{eq:intro-spectral-main}
	\lambda\bigl(a^2A+b^2B+c(A\natural B)\bigr)
	\prec_w
	\lambda\bigl(W_{a,b}(A,B)\bigr),
	\qquad 0\le c\le 2ab.
\end{equation}
When \(c=2ab\), the weak majorization in
\eqref{eq:intro-spectral-main}  becomes majorization.  The proof reduces,
after a Riccati change of variables, to a Schur multiplier estimate by a
positive semidefinite matrix with diagonal bounded by one; at the sharp
coefficient \(c=2ab\), this multiplier is a correlation matrix of rank at most
three.

Our second main result treats the Kubo-Ando Heron expression
\[
H^{\#}_{a,b}(A,B)=a^2A+b^2B+2ab(A\#B).
\]
In the normalized case \(a+b=1\), this is the corresponding two-variable Heron
mean.  We prove the direct weak majorization
\begin{equation}
	\label{eq:intro-kubo-main}
	\lambda\bigl(H^{\#}_{a,b}(A,B)\bigr)
	\prec_w
	\lambda\bigl(W_{a,b}(A,B)\bigr).
\end{equation}
This comparison uses a nonlinear pinching principle: if \(C\in\Pn\) and
\(R,S\in\Pn\) commute with \(R+S=I\), then
\begin{equation}
	\label{eq:intro-nonlinear-pinching}
	\lambda\bigl(RCR+SCS+2(RCR\#SCS)\bigr)
	\prec_w
	\lambda(C).
\end{equation}
After proving both Heron-Wasserstein comparisons, we show that the two Heron
expressions are themselves incomparable in weak majorization.  Hence the
Kubo-Ando result is not a formal consequence of the spectral one.

The nonlinear pinching principle also yields a weak-majorization strengthening
of the two-variable Heron form of a question of Bhatia, Lim, and Yamazaki
\cite{BhatiaLimYamazaki2016}.  They considered norm comparisons between
Kubo-Ando power means and their non-Kubo-Ando extensions.  In the
two-variable Heron case, the question becomes whether
\[
\ui{A+B+2(A\#B)}
\le
\ui{A+B+A^{1/2}B^{1/2}+B^{1/2}A^{1/2}}
\]
holds for every unitarily invariant norm.  Bhatia, Lim, and Yamazaki verified
this inequality for the Schatten norms \(p=1,2,\infty\), and Dinh, Dumitru, and
Franco subsequently proved the Schatten \(p\)-norm version for every
\(1\le p\le\infty\) \cite{DinhDumitruFranco2017}.  Related Schatten-norm and
determinant inequalities for matrix Heron means were obtained in
\cite{Dinh2017Heron}, and complementary inequalities related to the same
question were studied in \cite{GhabriesAbbasMouradAssi2023}.  Here we prove the
stronger weak-majorization statement
\begin{equation}
	\label{eq:intro-bly-main}
	\lambda\bigl(a^2A+b^2B+2ab(A\#B)\bigr)
	\prec_w
	\lambda\bigl((aA^{1/2}+bB^{1/2})^2\bigr).
\end{equation}
By Ky Fan dominance, \eqref{eq:intro-bly-main} implies the two-variable Heron
inequality for all unitarily invariant norms.  Thus, in this two-variable Heron
case, it upgrades the known Schatten-norm result to a weak-majorization
inequality.   

The paper is organized as follows.  Section~\ref{sec:preliminaries} fixes
notation and records a Riccati parametrization used repeatedly.
Section~\ref{sec:spectral-schur} proves the Schur multiplier mechanism and the
sharp spectral-Heron-Wasserstein comparison.
Section~\ref{sec:nonlinear-pinching} proves the nonlinear pinching principle,
the direct Kubo-Ando Heron-Wasserstein comparison, and the incomparability of
the two Heron expressions.
Section~\ref{sec:bly} proves the weak-majorization form of the two-variable
Bhatia-Lim-Yamazaki Heron inequality. 
 \section{Preliminaries}
 \label{sec:preliminaries}
 
For a Hermitian matrix $T$, let
\[
\lambda(T)=\bigl(\lambda_1^\downarrow(T),\ldots,
\lambda_n^\downarrow(T)\bigr)
\]
be the eigenvalue vector arranged in decreasing order.  For a real vector \(x\),
let \(x^\downarrow\) denote its decreasing rearrangement.  For
\(x,y\in\mathbb R^n\), we write \(x\prec_w y\) if
\[
\sum_{j=1}^k x_j^\downarrow\le
\sum_{j=1}^k y_j^\downarrow,
\qquad 1\le k\le n,
\]
and \(x\prec y\) if, in addition, equality holds for \(k=n\).  For positive vectors \(x,y\in(0,\infty)^n\), we say that \(x\) is
log-majorized by \(y\), and write \(x\prec_{\log}y\), if
\[
\prod_{j=1}^k x_j^\downarrow\le
\prod_{j=1}^k y_j^\downarrow,
\qquad 1\le k<n,
\qquad
\prod_{j=1}^n x_j=\prod_{j=1}^n y_j.
\]
This is stronger than weak majorization: \(x\prec_{\log}y\) implies
\(x\prec_w y\).
 
We shall use the following standard Ky Fan variational formulas; see, for
example, \cite{Bhatia1997}.  If $Y$ is Hermitian, then
 \[
 \sum_{j=1}^k\lambda_j^\downarrow(Y)
 =
 \max_{\rank E=k}\tr(EY),
 \]
 where the maximum is over orthogonal projections of rank $k$.  For a Hermitian
 matrix $H$, we write
 \[
 H_+=\frac{|H|+H}{2}
 \]
 for its positive part.  If \(Y\ge0\), then the preceding Ky Fan sum also admits the elementary
 threshold representation
 \begin{equation}
 	\label{eq:kyfan-threshold-formula}
 	\sum_{j=1}^k\lambda_j^\downarrow(Y)
 	=
 	\min_{t\ge0}\left\{kt+\tr\pospart{Y-tI}\right\}.
 \end{equation}
 Indeed, if \(\mu_1\ge\cdots\ge\mu_n\ge0\) are the eigenvalues of \(Y\), then the
 right-hand side equals
 \[
 \min_{t\ge0}\left\{kt+\sum_{j=1}^n(\mu_j-t)_+\right\},
 \]
 and the minimum is attained for any
 \(t\in[\mu_{k+1},\mu_k]\), with the convention \(\mu_{n+1}=0\).
 The Ky Fan dominance principle says that weak majorization of singular-value
 vectors is equivalent to domination for all unitarily invariant norms; see, for
 example, \cite[Chapter~IV]{Bhatia1997}.  Since all matrices compared below are
 positive semidefinite, their singular values are their eigenvalues.

 When norm inequalities are stated, \(\ui{\cdot}\) denotes an arbitrary
 unitarily invariant norm on \(\Mn\).  Thus an inequality of the form
 \[
 \ui{X}\le \ui{Y}
 \]
 means that the inequality holds for every unitarily invariant norm.

 We shall use the standard comparison
 \begin{equation}
 \label{eq:kubo-spectral-logmaj}
 \lambda(P\#Q)\prec_{\log}\lambda(P\natural Q),
 \qquad P,Q\in\Pn,
 \end{equation}
 proved in \cite[Theorem~2.7]{GanLiuTam2021}.  In particular,
 \begin{equation}
 \label{eq:kubo-spectral-trace}
 \tr(P\#Q)\le \tr(P\natural Q),
 \qquad P,Q\in\Pn.
 \end{equation}
 
 For $A,B\in\Pn$, the product $AB$ has positive spectrum, and $(AB)^{1/2}$ denotes
 its principal square root.  Although $(AB)^{1/2}$ need not be Hermitian, the sum
 $(AB)^{1/2}+(BA)^{1/2}$ is Hermitian.  Indeed,
 $((AB)^{1/2})^*$ is a square root of $BA$ whose spectrum is contained in
 $(0,\infty)$; by uniqueness of the principal square root,
 $((AB)^{1/2})^*=(BA)^{1/2}$.
 
 For $0\le t\le1$, we write
 \[
 A\diamond_tB:=W_{1-t,t}(A,B).
 \]
 Thus $A\diamond_tB$ is the two-variable Bures-Wasserstein geodesic point with
 weights $1-t$ and $t$.
 
 \begin{lemma}
 \label{lem:riccati-parametrization}
 Let $A,B\in\Pn$ and put
 \[
 X=A^{-1}\#B.
 \]
 Then $X$ is the unique positive definite solution of the Riccati equation
 $XAX=B$.  Moreover,
 \[
 (AB)^{1/2}=AX,
 \qquad
 (BA)^{1/2}=XA,
 \]
 and, for every $a,b\ge0$,
 \begin{equation}
 	\label{eq:W-Riccati-form}
 	W_{a,b}(A,B)
 	:=a^2A+b^2B+ab\bigl((AB)^{1/2}+(BA)^{1/2}\bigr)
 	=(aI+bX)A(aI+bX).
 \end{equation}
 \end{lemma}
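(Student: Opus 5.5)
We first establish that \(X=A^{-1}\#B\) solves the Riccati equation \(XAX=B\) and that it is the only positive definite solution. Existence comes from a direct computation. Write \(X=A^{-1/2}(A^{1/2}BA^{1/2})^{1/2}A^{-1/2}\), which is the definition \(P\#Q=P^{1/2}(P^{-1/2}QP^{-1/2})^{1/2}P^{1/2}\) with \(P=A^{-1}\) and \(Q=B\). Then
\[
XAX=A^{-1/2}(A^{1/2}BA^{1/2})^{1/2}(A^{1/2}BA^{1/2})^{1/2}A^{-1/2}=B .
\]
For uniqueness, let \(Y\in\Pn\) satisfy \(YAY=B\). Conjugating by \(A^{1/2}\) gives \((A^{1/2}YA^{1/2})^2=A^{1/2}BA^{1/2}\). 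Since \(A^{1/2}YA^{1/2}\) is positive definite, uniqueness of the positive square root forces \(A^{1/2}YA^{1/2}=(A^{1/2}BA^{1/2})^{1/2}\), and therefore \(Y=X\).

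Next we identify the square roots. From \(XAX=B\) we get \((AX)^2=AXAX=AB\). We check that \(AX\) is the principal square root, i.e.\ that its spectrum lies in \((0,\infty)\). This holds because \(AX=A^{1/2}(A^{1/2}XA^{1/2})A^{-1/2}\) is similar to the positive definite matrix \(A^{1/2}XA^{1/2}\). A matrix with positive spectrum has at most one square root with positive spectrum; this is the standard uniqueness of the principal square root, obtained for instance from the holomorphic functional calculus or Jordan form. Hence \((AB)^{1/2}=AX\). Taking adjoints, and using the observation in the preliminaries that \(((AB)^{1/2})^*=(BA)^{1/2}\), we obtain \((BA)^{1/2}=(AX)^*=XA\).

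Finally, we expand
\[
(aI+bX)A(aI+bX)=a^2A+ab(AX+XA)+b^2XAX .
\]
Substituting \(XAX=B\) and the two square-root identities gives exactly \(W_{a,b}(A,B)\), which proves \eqref{eq:W-Riccati-form}.

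The only point that needs care is justifying that \(AX\) is the principal square root rather than some other square root of \(AB\). The similarity argument above handles this, together with the uniqueness of a square root with spectrum in the open right half-plane.
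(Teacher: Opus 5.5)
Your proof is correct and follows essentially the same route as the paper: you use $(AX)^2=AB$, the similarity of $AX$ to $A^{1/2}XA^{1/2}$, and uniqueness of the principal square root, and then expand using $XAX=B$. The only differences are that you actually prove the existence and uniqueness of the Riccati solution, which the paper simply cites as standard, and that you obtain $(BA)^{1/2}=XA$ by taking adjoints via the preliminary identity $((AB)^{1/2})^*=(BA)^{1/2}$ rather than repeating the similarity argument for $XA$.
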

 
 \begin{proof}
 The Riccati identity $XAX=B$ is a standard property of $A^{-1}\#B$.  Then
 $(AX)^2=A(XAX)=AB$ and $(XA)^2=(XAX)A=BA$.  The matrix $AX$ is similar to the
 positive definite matrix $A^{1/2}XA^{1/2}$, and $XA$ is similar to the same
 matrix.  Hence both $AX$ and $XA$ have spectrum contained in $(0,\infty)$, so by
 uniqueness of the principal square root,
 \[
 (AB)^{1/2}=AX,
 \qquad
 (BA)^{1/2}=XA.
 \]
 Formula \eqref{eq:W-Riccati-form} follows immediately from $XAX=B$.
 \end{proof}
 
 \section{Spectral Heron means and Schur majorization}
 \label{sec:spectral-schur}
 
 For $a,b\ge0$ and $c\ge0$, define the coefficient-perturbed spectral Heron
 expression
 \[
 H^{\natural,c}_{a,b}(A,B)
 :=a^2A+b^2B+c(A\natural B).
 \]
 The sharp spectral Heron expression is
 \[
 H^\natural_{a,b}(A,B):=H^{\natural,2ab}_{a,b}(A,B)
 =a^2A+b^2B+2ab(A\natural B).
 \]
 The key tool for comparing $H^{\natural,c}_{a,b}$ with $W_{a,b}$ is a Schur
 multiplier contraction.
 
\begin{lemma}
	\label{lem:schur-subcorrelation}
	Let \(\Gamma=(\gamma_{ij})\in\Mn\) be positive semidefinite.
	\begin{enumerate}[label=\textup{(\roman*)}]
		\item If \(\gamma_{ii}=1\) for all \(i\), then for every Hermitian matrix
		\(M\),
		\[
		\lambda(\Gamma\circ M)\prec\lambda(M),
		\]
		where \(\circ\) denotes the Schur product.
		
		\item If \(0\le\gamma_{ii}\le1\) for all \(i\), then for every positive
		semidefinite matrix \(M\),
		\[
		\lambda(\Gamma\circ M)\prec_w\lambda(M).
		\]
	\end{enumerate}
\end{lemma}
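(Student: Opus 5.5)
The plan is to factor the Schur multiplier through a unital (or subunital) positive linear map. For (i), write the positive semidefinite matrix as a Gram matrix, \(\Gamma=V^*V\). Concretely, choose vectors \(v_1,\dots,v_n\in\mathbb C^m\) with \(\gamma_{ij}=\langle v_j,v_i\rangle\), so that \(\|v_i\|^2=\gamma_{ii}=1\). Let \(V:\mathbb C^n\to\mathbb C^n\otimes\mathbb C^m\) be the isometry \(Ve_i=e_i\otimes v_i\); it is an isometry because the \(v_i\) are unit vectors. A direct computation on matrix units then gives
\[
\Gamma\circ M=V^*(M\otimes I_m)V.
\]
Indeed, \(\langle e_i,V^*(M\otimes I)Ve_j\rangle=m_{ij}\langle v_i,v_j\rangle\), and this equals \(\gamma_{ij}m_{ij}\) up to choosing the convention \(\gamma_{ij}=\langle v_i,v_j\rangle\) (or replacing \(v_i\) by conjugates). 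So \(\Phi(M)=\Gamma\circ M\) is a compression of \(M\otimes I_m\) by an isometry. It is therefore positive and unital (\(\Phi(I)=\diag(\gamma_{ii})=I\)), and it is also trace preserving, since \(\tr(\Gamma\circ M)=\sum_i\gamma_{ii}m_{ii}=\tr M\).

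Majorization in (i) then follows from the Ky Fan maximum principle. For a rank \(k\) projection \(E\),
\[
\tr(E\,\Phi(M))=\tr\bigl((VEV^*)(M\otimes I)\bigr).
\]
Here \(VEV^*\) is a rank \(k\) projection on the larger space, but bounding this by \(\sum_{j\le k}\lambda_j(M\otimes I)\) is useless, because \(M\otimes I\) repeats each eigenvalue \(m\) times. To get a useful bound I would instead use the doubly stochastic route. Diagonalize \(M=U D U^*\) and \(\Gamma\circ M=W\Lambda W^*\). The diagonal of \(W^*(\Gamma\circ M)W\) gives \(\lambda(\Gamma\circ M)=S\,\lambda(M)\), where
\[
s_{pq}=\langle w_p,\Phi(u_qu_q^*)w_p\rangle\ge0.
\]
The row sums are \(\langle w_p,\Phi(I)w_p\rangle=1\) by unitality, and the column sums are \(\tr\Phi(u_qu_q^*)=1\) by trace preservation. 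So \(S\) is doubly stochastic, and Hardy–Littlewood–Pólya (Birkhoff) yields \(\lambda(\Gamma\circ M)\prec\lambda(M)\). The main technical point is exactly this: since the Kraus representation inflates multiplicities, one should not use Ky Fan directly but should check positivity, unitality, and trace preservation of \(\Phi\), all of which are immediate from the Gram factorization.

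For (ii), the same construction applies. Now \(\Phi(I)=\diag(\gamma_{ii})\le I\) and \(\tr\Phi(Y)=\sum\gamma_{ii}y_{ii}\le\tr Y\) for \(Y\ge0\), so \(S\) has nonnegative entries with row and column sums at most \(1\), i.e. \(S\) is doubly substochastic. For \(M\ge0\) the vector \(\lambda(M)\) is nonnegative, and the standard fact that a doubly substochastic matrix applied to a nonnegative vector gives \(Sx\prec_w x\) finishes the proof. Positivity of \(M\) is needed here: for instance, \(\Gamma=0\) and \(M=-I\) would violate the conclusion otherwise. Alternatively, one can reduce (ii) to (i) directly. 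Choose \(\Gamma'=\Gamma+\diag(1-\gamma_{ii})\), which is positive semidefinite with unit diagonal. Then
\[
\Gamma'\circ M=\Gamma\circ M+\diag\bigl((1-\gamma_{ii})m_{ii}\bigr)\ge\Gamma\circ M
\]
when \(M\ge0\). Weyl monotonicity together with (i) gives \(\lambda(\Gamma\circ M)\le\lambda(\Gamma'\circ M)\prec\lambda(M)\) entrywise, hence \(\prec_w\).
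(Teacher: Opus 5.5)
Your proof is correct, but for (i) you take a different route from the paper. The paper argues dually. The Hilbert--Schmidt adjoint of $\Phi_\Gamma$ is $\Phi_{\overline{\Gamma}}$, which is again positive and unital. So $F=\Phi_\Gamma^*(E)$ satisfies $0\le F\le I$ and $\tr F=k$, and Ky Fan's variational principle gives $\tr(E\,\Phi_\Gamma(M))=\tr(FM)\le\sum_{j\le k}\lambda_j^\downarrow(M)$ immediately. You instead build the doubly stochastic matrix $s_{pq}=\langle w_p,\Phi(u_qu_q^*)w_p\rangle$ and use the easy direction of Hardy--Littlewood--P\'olya.

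These are the same mechanism in different coordinates. If $E$ is the projection onto $\mathrm{span}\{w_1,\dots,w_k\}$, then summing column $q$ of $S$ over its first $k$ rows gives exactly $\langle u_q,Fu_q\rangle$. Two remarks follow from this:
\begin{itemize}
\item The obstacle you flagged, that Ky Fan ``cannot be used directly'' because the dilation $M\otimes I_m$ inflates multiplicities, only arises because you dilate $M$. The paper avoids it by moving $E$ across with the adjoint instead.
\item The Gram/isometry factorization is more than you need. Positivity of $\Phi$ is just the Schur product theorem.
\end{itemize}

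What your formulation buys is a one-line passage to (ii). Subunitality and trace-subpreservation make $S$ doubly substochastic, and $Sx\prec_w x$ for $x\ge0$ finishes the proof without the diagonal padding $\Gamma+\diag(1-\gamma_{ii})$. Your second argument for (ii) is exactly the paper's.
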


\begin{proof}
	The correlation-matrix case in \textup{(i)} is the classical Schur-product
	majorization theorem of Bapat and Sunder \cite{BapatSunder1985}; we recall a
	short proof for completeness.  Let
	\[
	\Phi_\Gamma(M)=\Gamma\circ M.
	\]
	Since \(\Gamma\ge0\), the Schur map \(\Phi_\Gamma\) is completely positive.  Its
	Hilbert-Schmidt adjoint is \(\Phi_\Gamma^*=\Phi_{\overline{\Gamma}}\).  If
	\(\gamma_{ii}=1\) for all \(i\), then
	\[
	\Phi_\Gamma(I)=I,
	\qquad
	\Phi_\Gamma^*(I)=I,
	\]
	so \(\Phi_\Gamma\) is unital and trace preserving.  Let \(M\) be Hermitian and
	let \(E\) be an orthogonal projection of rank \(k\).  Put
	\[
	F=\Phi_\Gamma^*(E).
	\]
	Then
	\[
	0\le F\le I,
	\qquad
	\tr F=k.
	\]
By Ky Fan's variational principle,
\[
\tr\bigl(E\Phi_\Gamma(M)\bigr)
=\tr(FM)
\le
\sum_{j=1}^k\lambda_j^\downarrow(M).
\]
	Maximizing over all rank-\(k\) projections \(E\) gives the Ky Fan inequalities.
	For \(k=n\), equality follows from trace preservation.  Hence
	\[
	\lambda(\Gamma\circ M)\prec\lambda(M).
	\]
	
	We now prove \textup{(ii)}.  Set
	\[
	\widetilde{\Gamma}
	=
	\Gamma+\diag(1-\gamma_{11},\ldots,1-\gamma_{nn}).
	\]
	Then \(\widetilde{\Gamma}\ge0\) and \(\widetilde{\gamma}_{ii}=1\) for all \(i\).
	By \textup{(i)},
	\[
	\lambda(\widetilde{\Gamma}\circ M)\prec\lambda(M).
	\]
	Since \(M\ge0\), its diagonal entries are nonnegative, and
	\[
	\widetilde{\Gamma}\circ M-\Gamma\circ M
	=
	\diag\bigl((1-\gamma_{ii})M_{ii}\bigr)\ge0.
	\]
	Therefore, by Weyl monotonicity,
	\[
	\lambda(\Gamma\circ M)
	\prec_w
	\lambda(\widetilde{\Gamma}\circ M)
	\prec_w
	\lambda(M).
	\]
	This proves \textup{(ii)}.
\end{proof} 
 \begin{proposition}
 \label{prop:abstract-schur-reduction}
 Let $R,C\in\Pn$, let $a,b\ge0$, and let $0\le c\le2ab$.  Put
 \[
 S_{a,b}(R,C)=(aR^{-1}+bR)C(aR^{-1}+bR)
 \]
 and
 \[
 T_{a,b;c}(R,C)=a^2R^{-1}CR^{-1}+b^2RCR+cC.
 \]
 Then
 \[
 \lambda\bigl(T_{a,b;c}(R,C)\bigr)
 \prec_w
 \lambda\bigl(S_{a,b}(R,C)\bigr).
 \]
 If $c=2ab$, then
 \[
 \lambda\bigl(T_{a,b;2ab}(R,C)\bigr)
 \prec
 \lambda\bigl(S_{a,b}(R,C)\bigr).
 \]
 \end{proposition}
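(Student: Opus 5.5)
We work in an eigenbasis of \(R\) and show that \(T_{a,b;c}(R,C)\) is a Schur product \(\Gamma\circ S_{a,b}(R,C)\) with an explicit \(\Gamma\); then Lemma~\ref{lem:schur-subcorrelation} finishes the proof.

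\emph{Reduction to diagonal \(R\).} Both sides transform by the same unitary congruence, so eigenvalues are unchanged. Write \(R=U\diag(r_1,\ldots,r_n)U^*\) with \(r_i>0\), and replace \(C\) by \(U^*CU\), which is still in \(\Pn\). We may therefore assume \(R=\diag(r_1,\ldots,r_n)\). Put
\[
u_i=\frac{a}{r_i},\qquad v_i=br_i,\qquad d_i=u_i+v_i .
\]
The key scalar identity is \(u_iv_i=ab\) for every \(i\). Then \(aR^{-1}+bR=D:=\diag(d_1,\ldots,d_n)\), so the entries of the two matrices are
\[
S_{a,b}(R,C)_{ij}=d_id_jC_{ij},
\qquad
T_{a,b;c}(R,C)_{ij}=(u_iu_j+v_iv_j+c)\,C_{ij}.
\]

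\emph{The Schur multiplier.} Assume first that every \(d_i>0\); this holds unless \(a=b=0\), which is trivial. Define
\[
\gamma_{ij}=\frac{u_iu_j+v_iv_j+c}{d_id_j}.
\]
Then \(T_{a,b;c}(R,C)=\Gamma\circ S_{a,b}(R,C)\) entrywise.

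The numerator matrix is \(uu^{T}+vv^{T}+c\,\one\one^{T}\). It is positive semidefinite of rank at most three, because \(c\ge0\). Conjugating by the positive diagonal matrix \(D^{-1}\) preserves positivity, so \(\Gamma\ge0\).

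For the diagonal entries, \(u_iv_i=ab\) gives
\[
\gamma_{ii}=\frac{u_i^2+v_i^2+c}{u_i^2+v_i^2+2ab}.
\]
This lies in \([0,1]\) because \(0\le c\le 2ab\), and it equals \(1\) exactly when \(c=2ab\).

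\emph{Conclusion.} \(S_{a,b}(R,C)=DCD\) is positive definite. Lemma~\ref{lem:schur-subcorrelation}(ii) therefore gives the weak majorization
\[
\lambda\bigl(T_{a,b;c}(R,C)\bigr)\prec_w\lambda\bigl(S_{a,b}(R,C)\bigr).
\]
When \(c=2ab\), \(\Gamma\) is a correlation matrix, and Lemma~\ref{lem:schur-subcorrelation}(i) upgrades this to majorization.

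The only real step is recognizing the multiplier \(\Gamma\) and observing that the identity \(u_iv_i=ab\) makes each diagonal entry equal to \((\,\cdot+c)/(\,\cdot+2ab)\). Once that is seen, everything else is routine, so I do not expect a genuine obstacle.
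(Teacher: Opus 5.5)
Your proof is correct and follows the paper's argument essentially verbatim: diagonalize \(R\), write \(T_{a,b;c}(R,C)=\Gamma\circ S_{a,b}(R,C)\) with \(\Gamma=D^{-1}(uu^{T}+vv^{T}+c\,\one\one^{T})D^{-1}\), use \(u_iv_i=ab\) to get \(\gamma_{ii}\le1\) (with equality exactly when \(c=2ab\)), and invoke Lemma~\ref{lem:schur-subcorrelation}. The only cosmetic difference is that you absorb the cases \(a=0\) or \(b=0\) into the general argument by noting \(d_i>0\) unless \(a=b=0\), whereas the paper disposes of them separately by observing that \(c=0\) then forces \(T_{a,b;c}(R,C)=S_{a,b}(R,C)\).
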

 
 \begin{proof}
 If $a=0$ or $b=0$, then $c=0$ and $T_{a,b;c}(R,C)=S_{a,b}(R,C)$.  Assume
 therefore that $a,b>0$.
 
 After a unitary conjugation, suppose that
 $R=\diag(r_1,\ldots,r_n)$ with $r_i>0$.  Set
 \[
 \alpha_i=ar_i^{-1},
 \qquad
 \beta_i=br_i,
 \qquad
 d_i=\alpha_i+\beta_i.
 \]
 Then $\alpha_i\beta_i=ab$.  Entrywise,
 \[
 \bigl(S_{a,b}(R,C)\bigr)_{ij}=d_id_jC_{ij},
 \]
 whereas
 \[
 \bigl(T_{a,b;c}(R,C)\bigr)_{ij}
 =(\alpha_i\alpha_j+\beta_i\beta_j+c)C_{ij}.
 \]
 Thus
 \[
 T_{a,b;c}(R,C)=\Gamma_c\circ S_{a,b}(R,C),
 \]
 where
 \[
 (\Gamma_c)_{ij}
 =\frac{\alpha_i\alpha_j+\beta_i\beta_j+c}{d_id_j}.
 \]
 Let $D=\diag(d_1,\ldots,d_n)$,
 $\alpha=(\alpha_1,\ldots,\alpha_n)^T$, and
 $\beta=(\beta_1,\ldots,\beta_n)^T$.  Then
 \[
 \Gamma_c
 =D^{-1}\bigl(\alpha\alpha^T+\beta\beta^T+c\one\one^T\bigr)D^{-1},
 \]
 so $\Gamma_c\ge0$.  Also,
 \[
 (\Gamma_c)_{ii}
 =\frac{\alpha_i^2+\beta_i^2+c}{(\alpha_i+\beta_i)^2}
 \le
 \frac{\alpha_i^2+\beta_i^2+2\alpha_i\beta_i}{(\alpha_i+\beta_i)^2}
 =1,
 \]
 because $c\le2ab=2\alpha_i\beta_i$.  Since \(S_{a,b}(R,C)\ge0\), the subcorrelation part of
 Lemma~\ref{lem:schur-subcorrelation} gives weak majorization.  If \(c=2ab\),
 then \((\Gamma_c)_{ii}=1\) for every \(i\), and the correlation-matrix part of
 Lemma~\ref{lem:schur-subcorrelation} gives majorization.
 \end{proof}
 
 \begin{remark}
 \label{rem:rank-three-multiplier}
 For every \(c\ge0\), the multiplier \(\Gamma_c\) appearing in
 Proposition~\ref{prop:abstract-schur-reduction} has rank at most three, since
 \[
 \Gamma_c
 =D^{-1}\bigl(\alpha\alpha^T+\beta\beta^T+c\one\one^T\bigr)D^{-1}.
 \]
 The endpoint \(c=2ab\) is special because the diagonal entries are then equal to
 one, so the multiplier is a correlation matrix.  More explicitly, in this case
 define
 \[
 s_i=\frac{\alpha_i-\beta_i}{d_i},
 \qquad
 t_i=\frac{2\sqrt{ab}}{d_i}.
 \]
 Then $s_i^2+t_i^2=1$ and
 \[
 (\Gamma_{2ab})_{ij}
 =\frac12(1+s_is_j+t_it_j).
 \]
 Therefore
 \[
 \Gamma_{2ab}=\frac12(\one\one^T+ss^T+tt^T),
 \]
 where $s=(s_1,\ldots,s_n)^T$ and $t=(t_1,\ldots,t_n)^T$.
 Thus, at the endpoint, \(\Gamma_{2ab}\) is a rank-at-most-three correlation
 matrix.
 \end{remark}
 
The spectral case admits a sharp coefficient form: the Wasserstein expression
dominates the quadratic spectral-Heron expression throughout the range
\(0\le c\le 2ab\).  At the endpoint \(c=2ab\), the comparison strengthens from
weak majorization to majorization.
 \begin{theorem}
 \label{thm:sharp-spectral-heron-wasserstein}
 Let $A,B\in\Pn$, let $a,b\ge0$, and let $0\le c\le2ab$.  Then
 \[
 \lambda\bigl(H^{\natural,c}_{a,b}(A,B)\bigr)
 \prec_w
 \lambda\bigl(W_{a,b}(A,B)\bigr).
 \]
 Consequently,
 \[
 \ui{H^{\natural,c}_{a,b}(A,B)}
 \le
 \ui{W_{a,b}(A,B)}
 \]
for every unitarily invariant norm.  For \(c=2ab\) one has the  
majorization
 \[
 \lambda\bigl(H^\natural_{a,b}(A,B)\bigr)
 \prec
 \lambda\bigl(W_{a,b}(A,B)\bigr).
 \]
 For fixed $a,b>0$, the upper coefficient $2ab$ is sharp among nonnegative
 coefficients, already in dimension one.
 \end{theorem}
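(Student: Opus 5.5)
The plan is to reduce the theorem to Proposition~\ref{prop:abstract-schur-reduction} by choosing $R$ and $C$ through the Riccati parametrization of Lemma~\ref{lem:riccati-parametrization}. Put $X=A^{-1}\#B$, so that $XAX=B$ and $W_{a,b}(A,B)=(aI+bX)A(aI+bX)$. Set
\[
R=X^{1/2},\qquad C=X^{1/2}AX^{1/2}=A\natural B,
\]
where the second equality is just the definition of the spectral mean. Since $X^{-1/2}CX^{-1/2}=A$, we get $R^{-1}CR^{-1}=A$. Also $RCR=XAX=B$. Hence
\[
T_{a,b;c}(R,C)=a^2A+b^2B+c(A\natural B)=H^{\natural,c}_{a,b}(A,B).
\]
For the Wasserstein side, $aR^{-1}+bR=(aI+bX)X^{-1/2}$, and therefore
\[
S_{a,b}(R,C)=(aI+bX)X^{-1/2}\,X^{1/2}AX^{1/2}\,X^{-1/2}(aI+bX)=W_{a,b}(A,B).
\]
This uses only that all functions of $X$ commute.

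With these identifications, Proposition~\ref{prop:abstract-schur-reduction} gives the weak majorization for $0\le c\le 2ab$ and the majorization for $c=2ab$. Both matrices are positive semidefinite: $S_{a,b}$ is a congruence of $C>0$, and $T_{a,b;c}$ is a sum of positive terms. So their eigenvalues are their singular values, and the norm inequality for every unitarily invariant norm follows from Ky Fan dominance.

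For sharpness, take $n=1$ with $A=x>0$ and $B=y>0$. Then $A\natural B=\sqrt{xy}$ and $W_{a,b}=(a\sqrt x+b\sqrt y)^2=a^2x+b^2y+2ab\sqrt{xy}$. The claimed inequality becomes $c\sqrt{xy}\le 2ab\sqrt{xy}$, which fails for every $c>2ab$. So $2ab$ is the largest admissible nonnegative coefficient.

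The only step needing care is the identification $S_{a,b}(R,C)=W_{a,b}(A,B)$ together with $A\natural B=X^{1/2}AX^{1/2}$, and that is a routine commuting-functional-calculus check. The case where $a=0$ or $b=0$ is already handled inside the Proposition, so I expect no real obstacle.
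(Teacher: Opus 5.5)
Your proposal is correct and follows essentially the same route as the paper's proof. Both use the Riccati substitution $R=X^{1/2}$ and $C=RAR=A\natural B$ to reduce the statement to Proposition~\ref{prop:abstract-schur-reduction}, then Ky Fan dominance for the norm inequality, and a one-dimensional example for sharpness (the paper takes $A=B=1$, while you allow general scalars $x,y>0$).
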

 
 \begin{proof}
 If $a=0$ or $b=0$, then $c=0$ and the assertion is immediate.  Assume
 $a,b>0$.  Put
 \[
 X=A^{-1}\#B,
 \qquad
 R=X^{1/2},
 \qquad
 C=RAR.
 \]
 By Lemma~\ref{lem:riccati-parametrization}, $XAX=B$.  Hence
 \[
 A=R^{-1}CR^{-1},
 \qquad
 B=RCR,
 \qquad
 A\natural B=C.
 \]
 Furthermore,
 \[
 \begin{aligned}
 	W_{a,b}(A,B)
 	&=(aI+bX)A(aI+bX)                                      \\
 	&=(aR^{-1}+bR)C(aR^{-1}+bR),
 \end{aligned}
 \]
 whereas
 \[
 H^{\natural,c}_{a,b}(A,B)
 =a^2R^{-1}CR^{-1}+b^2RCR+cC.
 \]
 The majorization assertions follow from
 Proposition~\ref{prop:abstract-schur-reduction}.  The norm inequality follows
 from the Ky Fan dominance principle.
 
 If $c>2ab$, take $A=B=1$.  Then
 \[
 H^{\natural,c}_{a,b}(1,1)=a^2+b^2+c,
 \qquad
 W_{a,b}(1,1)=(a+b)^2.
 \]
 Weak majorization in dimension one is ordinary order.  A universal comparison
 would therefore force $a^2+b^2+c\le(a+b)^2$, equivalently $c\le2ab$.
 \end{proof}
 
 \begin{corollary}
 \label{cor:weighted-spectral-heron-wasserstein}
 
 Let $A,B\in\Pn$, let $0\le t\le1$, and let $0\le c\le2t(1-t)$.  Then
 \[
 \lambda\Bigl((1-t)^2A+t^2B+c(A\natural B)\Bigr)
 \prec_w
 \lambda(A\diamond_tB).
 \]
 Consequently,
 \[
 \ui{(1-t)^2A+t^2B+c(A\natural B)}
 \le
 \ui{A\diamond_tB}
 \]
 for every unitarily invariant norm.  At the endpoint coefficient \(c=2t(1-t)\), one has the  majorization
 \[
 \lambda\Bigl((1-t)^2A+t^2B+2t(1-t)(A\natural B)\Bigr)
 \prec
 \lambda(A\diamond_tB).
 \]
 \end{corollary}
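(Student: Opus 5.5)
This corollary is the specialization of Theorem~\ref{thm:sharp-spectral-heron-wasserstein} to the weights \(a=1-t\) and \(b=t\). For \(0\le t\le1\) both weights are nonnegative, and the admissible range \(0\le c\le2ab\) in that theorem becomes exactly \(0\le c\le 2t(1-t)\). On the right-hand side, the notation of Section~\ref{sec:preliminaries} gives \(A\diamond_tB=W_{1-t,t}(A,B)\), that is,
\[
W_{1-t,t}(A,B)=(1-t)^2A+t^2B+t(1-t)\bigl((AB)^{1/2}+(BA)^{1/2}\bigr).
\]
On the left-hand side, \(H^{\natural,c}_{1-t,t}(A,B)=(1-t)^2A+t^2B+c(A\natural B)\) by definition.

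The plan is therefore as follows. First, set \(a=1-t\), \(b=t\) and check the hypotheses: \(a,b\ge0\) and \(0\le c\le 2ab\). Second, invoke Theorem~\ref{thm:sharp-spectral-heron-wasserstein}, which gives
\[
\lambda\bigl(H^{\natural,c}_{1-t,t}(A,B)\bigr)\prec_w\lambda\bigl(W_{1-t,t}(A,B)\bigr).
\]
This is the first claim, once both sides are rewritten in the corollary's notation. Third, for the norm inequality, apply the Ky Fan dominance principle. This is legitimate because both matrices are positive semidefinite, so their singular values are their eigenvalues. (Alternatively, cite the norm clause of the theorem directly.) Fourth, at \(c=2t(1-t)=2ab\), use the majorization clause of the theorem.

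The degenerate endpoints \(t=0\) and \(t=1\) need a separate look. There \(c\) is forced to be \(0\), and the two sides coincide, both equal to \(A\) or both equal to \(B\). The theorem already covers this case, since it allows \(a=0\) or \(b=0\).

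There is no real obstacle here. The only points needing care are the identification of \(A\diamond_tB\) with \(W_{1-t,t}(A,B)\) and the translation of the coefficient range.
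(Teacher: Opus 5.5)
Your proposal is correct and matches the paper, which states this corollary without a separate proof as the specialization $a=1-t$, $b=t$ of Theorem~\ref{thm:sharp-spectral-heron-wasserstein}, using $A\diamond_tB=W_{1-t,t}(A,B)$ and $2ab=2t(1-t)$. Your remarks on the degenerate endpoints $t\in\{0,1\}$ and on Ky Fan dominance for positive semidefinite matrices are accurate, but the theorem already covers both.
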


The endpoint majorization also gives a useful spectral-spreading interpretation.
 \begin{corollary}
 \label{cor:spectral-spreading}
 Let $A,B\in\Pn$ and $a,b\ge0$.  Then, for every $1\le k\le n$,
 \[
 \sum_{j=1}^k\lambda_j^\downarrow\bigl(H^\natural_{a,b}(A,B)\bigr)
 \le
 \sum_{j=1}^k\lambda_j^\downarrow\bigl(W_{a,b}(A,B)\bigr),
 \]
 and, because the two matrices have the same trace,
 \[
 \sum_{j=n-k+1}^n\lambda_j^\downarrow\bigl(H^\natural_{a,b}(A,B)\bigr)
 \ge
 \sum_{j=n-k+1}^n\lambda_j^\downarrow\bigl(W_{a,b}(A,B)\bigr).
 \]
 In particular,
 \[
 \lambda_1^\downarrow\bigl(H^\natural_{a,b}(A,B)\bigr)
 \le
 \lambda_1^\downarrow\bigl(W_{a,b}(A,B)\bigr),
 \qquad
 \lambda_n^\downarrow\bigl(H^\natural_{a,b}(A,B)\bigr)
 \ge
 \lambda_n^\downarrow\bigl(W_{a,b}(A,B)\bigr).
 \]
 If $a,b>0$, then also
 \[
 \det H^\natural_{a,b}(A,B)
 \ge
 \det W_{a,b}(A,B).
 \]
 Thus the spectral Heron expression is spectrally less spread than the
 corresponding Bures-Wasserstein expression.
 \end{corollary}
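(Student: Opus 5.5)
This corollary is a direct readout of the endpoint majorization in Theorem~\ref{thm:sharp-spectral-heron-wasserstein}, namely
\[
\lambda\bigl(H^\natural_{a,b}(A,B)\bigr)\prec\lambda\bigl(W_{a,b}(A,B)\bigr),
\]
which holds for all \(a,b\ge0\). The case \(a=0\) or \(b=0\) is trivial, since then \(c=0\) and the two matrices coincide. We therefore write \(x=\lambda(H^\natural_{a,b}(A,B))\) and \(y=\lambda(W_{a,b}(A,B))\), both in decreasing order.

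The first family of inequalities is exactly the Ky Fan partial-sum part of \(x\prec y\). For the second family, majorization also gives \(\sum_{j=1}^n x_j=\sum_{j=1}^n y_j\). This equality of traces can be confirmed directly from the Riccati form: with \(R=X^{1/2}\) and \(C=RAR\), we have \(\tr W=\tr(a^2A+b^2B)+2ab\tr C\), since \(\tr\bigl((aR^{-1}+bR)C(aR^{-1}+bR)\bigr)\) has cross terms \(ab\tr(R^{-1}CR+RCR^{-1})=2ab\tr C\). Subtracting the top \(n-k\) partial sum inequality from this trace equality gives the bottom-\(k\) sum inequality. The two displayed extreme-eigenvalue inequalities are then the cases \(k=1\) of the two families.

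For the determinant, assume \(a,b>0\). Both matrices are then positive definite: \(W=(aI+bX)A(aI+bX)\) is a congruence of \(A\) by an invertible matrix, and \(H^\natural\ge a^2A>0\). So \(x\) and \(y\) lie in \((0,\infty)^n\). The function \(\log\) is concave, so \(\sum_j\log t_j\) is Schur-concave on positive vectors, and \(x\prec y\) therefore yields \(\sum\log x_j\ge\sum\log y_j\), which is \(\det H^\natural\ge\det W\). To avoid citing Schur-concavity, one can instead use the fact that \(x\prec y\) means \(x=Py\) for some doubly stochastic \(P\), and then apply Jensen's inequality row by row to the concave function \(\log\).

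The only point needing any care is positivity of the spectra, which the log/Schur-concavity step requires; it is immediate from the congruence form of \(W\) and the bound \(H^\natural\ge a^2A\). Everything else follows formally from the majorization already established.
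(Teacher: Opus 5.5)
Your proposal is correct and is essentially the paper's proof: you take the Ky Fan sums from the endpoint majorization in Theorem~\ref{thm:sharp-spectral-heron-wasserstein}, get the bottom sums by subtracting from the common trace, and obtain the determinant from concavity of $\log$ (your doubly stochastic/Jensen argument is equivalent to the paper's use of Karamata). Your direct Riccati check of the trace equality and your explicit positivity checks ($H^\natural_{a,b}(A,B)\ge a^2A>0$ and $W_{a,b}(A,B)=(aI+bX)A(aI+bX)>0$) are valid additions that the paper leaves implicit.
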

\begin{proof}
	The inequalities for the sums of the \(k\) largest eigenvalues are exactly the
	Ky Fan inequalities from Theorem~\ref{thm:sharp-spectral-heron-wasserstein} in
	the case \(c=2ab\).  Since the endpoint comparison is a majorization, the two
	matrices have the same trace.  Hence the inequalities for the sums of the
	\(k\) smallest eigenvalues follow by subtracting the corresponding inequalities
	for the \(n-k\) largest eigenvalues from the common trace.
	
	If \(a,b>0\), then both \(H^\natural_{a,b}(A,B)\) and \(W_{a,b}(A,B)\) are
	positive definite.  Since
	\[
	\lambda(H^\natural_{a,b}(A,B))
	\prec
	\lambda(W_{a,b}(A,B)),
	\]
	Karamata's inequality applied to the concave function \(x\mapsto\log x\) on
	\((0,\infty)\) gives
	\[
	\sum_{j=1}^n\log\lambda_j(H^\natural_{a,b}(A,B))
	\ge
	\sum_{j=1}^n\log\lambda_j(W_{a,b}(A,B)).
	\]
	Exponentiating yields the determinant inequality.
\end{proof}
 
 \begin{proposition}
 \label{prop:matrix-equality-case}
 Let $A,B\in\Pn$ and let $a,b>0$.  Then
 \[
 H^\natural_{a,b}(A,B)=W_{a,b}(A,B)
 \]
 if and only if $A$ and $B$ commute.
 \end{proposition}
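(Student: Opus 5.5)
Throughout I use the Riccati parametrization from the proof of Theorem~\ref{thm:sharp-spectral-heron-wasserstein}:
\[
X=A^{-1}\#B,\qquad R=X^{1/2},\qquad C=RAR,
\]
so that \(A=R^{-1}CR^{-1}\), \(B=RCR\) and \(A\natural B=C\).

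Subtracting the two expressions gives
\[
W_{a,b}(A,B)-H^\natural_{a,b}(A,B)=ab\bigl(R^{-1}CR+RCR^{-1}-2C\bigr).
\]
Since \(a,b>0\), equality holds if and only if
\[
R^{-1}CR+RCR^{-1}=2C.
\]

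I will work in an eigenbasis of \(R\), with \(R=\diag(r_1,\ldots,r_n)\) and all \(r_i>0\). The \((i,j)\) entry of \(R^{-1}CR+RCR^{-1}-2C\) is
\[
\Bigl(\tfrac{r_j}{r_i}+\tfrac{r_i}{r_j}-2\Bigr)C_{ij}=\frac{(r_i-r_j)^2}{r_ir_j}\,C_{ij}.
\]
This is the same computation as \(1-(\Gamma_{2ab})_{ij}\) in Proposition~\ref{prop:abstract-schur-reduction}, up to scaling. So equality holds if and only if \(C_{ij}=0\) whenever \(r_i\ne r_j\). That is exactly the statement that \(C\) commutes with \(R\).

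\emph{Forward direction.} Suppose equality holds, so \(C\) commutes with \(R\). Then
\[
A=R^{-1}CR^{-1}=CR^{-2},\qquad B=RCR=CR^{2}.
\]
Both are functions of the commuting pair \((C,R)\). Hence
\[
AB=C^2=BA.
\]

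\emph{Converse.} Suppose \(A\) and \(B\) commute. I must show that \(C\) commutes with \(R\), which amounts to showing that \(X\) commutes with \(A\).
\begin{itemize}[label=\textbullet]
\item For commuting positive definite matrices, \(A^{-1}\#B=A^{-1/2}B^{1/2}\). One can verify directly that this matrix is positive definite and solves \(XAX=B\). Uniqueness of the Riccati solution in Lemma~\ref{lem:riccati-parametrization} then identifies it with \(A^{-1}\#B\).
\item Hence \(X\) commutes with \(A\), and so does \(R=X^{1/2}\).
\item Therefore \(C=RAR=AX\) commutes with \(R\), and the entrywise criterion gives equality.
\end{itemize}

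\emph{Main obstacle.} The only real subtlety is the entrywise criterion. I need the factor \((r_i-r_j)^2/(r_ir_j)\) to vanish exactly when \(r_i=r_j\), which it does. I also need "\(C_{ij}=0\) whenever \(r_i\ne r_j\)" to be equivalent to \([C,R]=0\). This is standard: \(C\) commutes with a diagonal \(R\) exactly when \(C\) is block diagonal with respect to the eigenspaces of \(R\).

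No majorization is needed; in particular, the equality-of-trace argument is not used.
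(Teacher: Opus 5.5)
Your proof is correct and follows the paper's argument essentially line for line. You use the same Riccati variables, and the same entrywise factor $(r_i-r_j)^2/(r_ir_j)$ reduces equality to $CR=RC$; the only cosmetic difference is in the converse, where you identify $A^{-1}\#B=A^{-1/2}B^{1/2}$ explicitly via Riccati uniqueness rather than citing simultaneous diagonalizability of $X$, $R$, $C$.
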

 
 \begin{proof}
 Use the notation from the proof of
 Theorem~\ref{thm:sharp-spectral-heron-wasserstein}.  After a unitary conjugation
 that diagonalizes $R$, write $R=\diag(r_1,\ldots,r_n)$.  Then
 \[
 \bigl(W_{a,b}(A,B)-H^\natural_{a,b}(A,B)\bigr)_{ij}
 =ab\left(\frac{r_i}{r_j}+\frac{r_j}{r_i}-2\right)C_{ij}
 =ab\frac{(r_i-r_j)^2}{r_ir_j}C_{ij}.
 \]
 Therefore equality holds if and only if $C_{ij}=0$ whenever $r_i\ne r_j$, which
 is exactly the condition $CR=RC$.  If $CR=RC$, then
 \[
 A=R^{-1}CR^{-1},
 \qquad
 B=RCR
 \]
 commute.  Conversely, if $A$ and $B$ commute, then $X=A^{-1}\#B$, $R=X^{1/2}$,
 and $C=RAR$ are simultaneously diagonalizable, so $CR=RC$.
 \end{proof}
 
 The result is not a Loewner-order comparison.  In the variables
 \(X=A^{-1}\#B\), \(R=X^{1/2}\), and \(C=RAR\),
 \[
 W_{a,b}(A,B)-H^\natural_{a,b}(A,B)
 =
 ab\bigl(R^{-1}CR+RCR^{-1}-2C\bigr).
 \]
 For \(R=\diag(1,2)\) and
 \(C=\begin{pmatrix}1&\rho\\ \rho&1\end{pmatrix}\), \(0<\rho<1\), the
 corresponding \(A=R^{-1}CR^{-1}\) and \(B=RCR\) are positive definite, while the
 difference above has zero diagonal and nonzero off-diagonal entries, hence is
 indefinite.  Thus the comparison is driven by Schur majorization rather than
 Loewner order.

 \section{Nonlinear pinching and direct Heron-Wasserstein comparisons}
 \label{sec:nonlinear-pinching}
 
 We now develop the nonlinear mechanism used for the Kubo-Ando Heron expression.
 
 \begin{lemma}
 \label{lem:nonlinear-kyfan-contraction}
 Let $\Phi:\Pn\to\Pn$ satisfy the following properties:
 \begin{enumerate}[label=\textup{(\roman*)}]
 	\item $\Phi$ is order preserving:
 	$C_1\le C_2$ implies $\Phi(C_1)\le\Phi(C_2)$.
 	\item $\Phi$ is positively homogeneous:
 	$\Phi(\alpha C)=\alpha\Phi(C)$ for $\alpha>0$.
 	\item $\Phi$ is unital: $\Phi(I)=I$.
 	\item $\Phi$ is trace-subpreserving: $\tr\Phi(C)\le\tr C$ for all $C\in\Pn$.
 \end{enumerate}
 Then
 \[
 \lambda(\Phi(C))\prec_w\lambda(C),
 \qquad C\in\Pn.
 \]
 \end{lemma}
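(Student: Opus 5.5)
The natural route is the threshold representation \eqref{eq:kyfan-threshold-formula}. Both $\Phi(C)$ and $C$ are positive definite, so for each $1\le k\le n$,
\[
\sum_{j=1}^k\lambda_j^\downarrow(C)=\min_{t\ge0}\bigl\{kt+\tr\pospart{C-tI}\bigr\}.
\]
It therefore suffices to show, for every $t\ge0$,
\[
\sum_{j=1}^k\lambda_j^\downarrow(\Phi(C))\le kt+\tr\pospart{C-tI},
\]
and then minimize over $t$.

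Fix $t\ge0$ and write $C\le tI+\pospart{C-tI}$. This is a Loewner inequality, since $C-tI\le\pospart{C-tI}$. The sum $tI+\pospart{C-tI}$ is positive semidefinite, but it may fail to be positive definite when $t=0$. To stay inside $\Pn$, I would replace $t$ by $t+\varepsilon$ for $\varepsilon>0$ and let $\varepsilon\to0$ at the end. Alternatively, for $t>0$ the matrix $D_t:=tI+\pospart{C-tI}$ is positive definite, and the case $t=0$ is trivial because the Ky Fan bound is then $\tr C\ge\tr\Phi(C)$ directly.

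By order preservation (i), $\Phi(C)\le\Phi(D_t)$. Ky Fan sums are monotone in the Loewner order (Weyl monotonicity), so
\[
\sum_{j=1}^k\lambda_j^\downarrow(\Phi(C))\le\sum_{j=1}^k\lambda_j^\downarrow(\Phi(D_t)).
\]
This is where the main obstacle lies. $\Phi$ is not additive, so $\Phi(D_t)$ cannot be split as $t\Phi(I)+\Phi(\pospart{C-tI})$. The ingredients available are only monotonicity, homogeneity, unitality, and trace-subpreservation.

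My first attempt would be to note that $D_t\ge tI$, hence $D_t-tI\ge0$. By (ii) and (iii), $\Phi(tI)=tI$, and monotonicity gives $\Phi(D_t)\ge tI$. I then want
\[
\sum_{j=1}^k\lambda_j^\downarrow(\Phi(D_t))\le kt+\tr\bigl(\Phi(D_t)-tI\bigr).
\]
This is valid for any matrix $Y\ge tI$: each of the top $k$ eigenvalues satisfies $\lambda_j\le t+(\lambda_j-t)$, and the remaining eigenvalues contribute nonnegative amounts $\lambda_j-t\ge0$. Equivalently, the threshold formula gives the upper bound $kt+\tr\pospart{Y-tI}$ with $\pospart{Y-tI}=Y-tI$.

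Trace-subpreservation (iv) then yields
\[
\tr\Phi(D_t)-nt\le\tr D_t-nt=\tr\pospart{C-tI},
\]
where the last equality holds because $\tr D_t=nt+\tr\pospart{C-tI}$. Chaining the inequalities gives the desired bound for each $t>0$. Taking the infimum over $t>0$, which suffices by continuity in $t$ (the minimum is attained at some $t\in[\mu_{k+1},\mu_k]$ and can be approached from $t>0$), proves
\[
\lambda(\Phi(C))\prec_w\lambda(C).
\]

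The key point is that the lower bound $\Phi(D_t)\ge tI$ converts the trace bound into a Ky Fan bound, and this avoids any need for additivity.
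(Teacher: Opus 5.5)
Your proof is correct and is essentially the paper's argument: your $D_t=tI+\pospart{C-tI}$ is exactly the paper's $C_t=C\vee tI$, and the key steps are the same, namely $\Phi(D_t)\ge tI$ from homogeneity, unitality and monotonicity, followed by trace-subpreservation to get $\tr\Phi(D_t)-nt\le\tr\pospart{C-tI}$. The only cosmetic difference is that you pass from $\Phi(C)$ to $\Phi(D_t)$ via Weyl monotonicity of Ky Fan sums, whereas the paper uses Loewner monotonicity of $Y\mapsto\tr\pospart{Y-tI}$ and applies the threshold formula to $\Phi(C)$ as well; also, your concern about $t=0$ is moot, since $D_0=C$ is already positive definite.
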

 
 \begin{proof}
 Fix $C\in\Pn$.  For $t\ge0$, write $C=U\diag(c_1,\ldots,c_n)U^*$ and define
 \[
 C_t=C\vee tI:=U\diag\bigl(\max\{c_1,t\},\ldots,\max\{c_n,t\}\bigr)U^*.
 \]
 Then $C\le C_t$, and hence $\Phi(C)\le\Phi(C_t)$.  The functional
 $Y\mapsto\tr\pospart{Y-tI}$ is monotone in the Loewner order because
 \[
 \tr\pospart{Y-tI}
 =\max_{0\le E\le I}\tr\bigl(E(Y-tI)\bigr).
 \]
 Thus
 \[
 \tr\pospart{\Phi(C)-tI}
 \le
 \tr\pospart{\Phi(C_t)-tI}.
 \]
 If $t>0$, then $C_t\ge tI$.  By order preservation, homogeneity, and unitality,
 $\Phi(C_t)\ge tI$.  Therefore
 \[
 \begin{aligned}
 	\tr\pospart{\Phi(C)-tI}
 	&\le \tr\pospart{\Phi(C_t)-tI}     \\
 	&=\tr\Phi(C_t)-nt                  \\
 	&\le\tr C_t-nt                     \\
 	&=\tr\pospart{C-tI}.
 \end{aligned}
 \]
 For $t=0$, the same inequality is exactly trace-subpreservation.  Hence the
 inequality holds for every $t\ge0$.  Using the Ky Fan threshold formula
 \eqref{eq:kyfan-threshold-formula}, for $1\le k\le n$ we get
 \[
 \begin{aligned}
 	\sum_{j=1}^k\lambda_j^\downarrow(\Phi(C))
 	&=\min_{t\ge0}\left\{kt+\tr\pospart{\Phi(C)-tI}\right\} \\
 	&\le\min_{t\ge0}\left\{kt+\tr\pospart{C-tI}\right\}     \\
 	&=\sum_{j=1}^k\lambda_j^\downarrow(C).
 \end{aligned}
 \]
 Thus $\lambda(\Phi(C))\prec_w\lambda(C)$.
 \end{proof}
 For \(0<R<I\), put \(S=I-R\) and define
 \[
 \Phi_R(C)=RCR+SCS+2(RCR\#SCS),\qquad C\in\Pn.
 \]
 The direct Kubo-Ando comparison rests on the following nonlinear pinching
 estimate.
 
 \begin{theorem}
 \label{thm:nonlinear-pinching-majorization}
 Let $C\in\Pn$ and let $R\in\Pn$ satisfy $0<R<I$.  Then
 \[
 \lambda\bigl(\Phi_R(C)\bigr)
 \prec_w
 \lambda(C).
 \]
 Equivalently,
 \[
 \lambda\Bigl(RCR+(I-R)C(I-R)
 +2\bigl(RCR\#(I-R)C(I-R)\bigr)\Bigr)
 \prec_w
 \lambda(C).
 \]
 \end{theorem}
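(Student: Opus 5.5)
The plan is to apply Lemma~\ref{lem:nonlinear-kyfan-contraction} to the map \(\Phi_R\) on \(\Pn\), with \(S=I-R\). Since \(0<R<I\), both \(R\) and \(S\) are positive definite and commute. So \(RCR,SCS\in\Pn\) for every \(C\in\Pn\), and \(\Phi_R(C)\in\Pn\). We check the four hypotheses of the lemma in turn; the first three are routine and the fourth carries the content.

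\emph{Order preservation.} If \(C_1\le C_2\), then \(RC_1R\le RC_2R\) and \(SC_1S\le SC_2S\), because congruence preserves the Loewner order. The Kubo-Ando mean \(\#\) is jointly monotone, so the cross term \(RCR\#SCS\) is also monotone in \(C\). Hence \(\Phi_R(C_1)\le\Phi_R(C_2)\).

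\emph{Homogeneity.} This follows from \((\alpha P)\#(\alpha Q)=\alpha(P\#Q)\) for \(\alpha>0\).

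\emph{Unitality.} Since \(R^2\) and \(S^2\) commute, \(R^2\#S^2=(R^2S^2)^{1/2}=RS\). Therefore
\[
\Phi_R(I)=R^2+S^2+2RS=(R+S)^2=I.
\]

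\emph{Trace-subpreservation.} This is the main step. Because \(R+S=I\),
\[
\tr C=\tr\bigl((R+S)C(R+S)\bigr)=\tr(RCR)+\tr(SCS)+2\tr(RCS).
\]
Here \(\tr(RCS)=\tr(C^{1/2}RSC^{1/2})\ge0\), since \(RS\ge0\). So it suffices to prove
\[
\tr(RCR\#SCS)\le\tr(RCS).
\]
I would not try Cauchy-Schwarz on a polar factorization \(P\#Q=P^{1/2}UQ^{1/2}\). That route only gives the bound \(\sqrt{\tr(RCR)\,\tr(SCS)}\), which is too weak.

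Instead, put \(P=RCR\) and \(Q=SCS\), and use the trace comparison \eqref{eq:kubo-spectral-trace}, \(\tr(P\#Q)\le\tr(P\natural Q)\). The spectral mean of this pair can be computed explicitly. Set \(X=R^{-1}S\). Because \(R\) and \(S\) commute and are positive definite, \(X\) is positive definite, and
\[
XPX=R^{-1}S\,RCR\,R^{-1}S=SCS=Q.
\]
By the uniqueness in Lemma~\ref{lem:riccati-parametrization}, \(P^{-1}\#Q=X=R^{-1}S\). Hence
\[
P\natural Q=(R^{-1}S)^{1/2}RCR(R^{-1}S)^{1/2}=(RS)^{1/2}C(RS)^{1/2}.
\]
Its trace is \(\tr(RSC)=\tr(RCS)\). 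This gives \(\tr(P\#Q)\le\tr(RCS)\), and therefore \(\tr\Phi_R(C)\le\tr C\).

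With all four hypotheses verified, Lemma~\ref{lem:nonlinear-kyfan-contraction} yields \(\lambda(\Phi_R(C))\prec_w\lambda(C)\). The only real obstacle is trace-subpreservation. It is resolved by recognizing that the Riccati solution for the pair \((RCR,SCS)\) is the commuting matrix \(R^{-1}S\), which lets the known comparison of Kubo-Ando and spectral means do the work.
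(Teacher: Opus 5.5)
Your proposal is correct and follows the paper's proof essentially step for step. You verify the four hypotheses of Lemma~\ref{lem:nonlinear-kyfan-contraction}, and you obtain trace-subpreservation exactly as the paper does: identify $P^{-1}\#Q=R^{-1}S$ via Riccati uniqueness, compute $P\natural Q=(RS)^{1/2}C(RS)^{1/2}$, and invoke $\tr(P\#Q)\le\tr(P\natural Q)$.
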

 
 \begin{proof}
 Set \(S=I-R\).  We verify the four hypotheses of
 Lemma~\ref{lem:nonlinear-kyfan-contraction} for the map \(\Phi_R\).  Order preservation follows from
 the monotonicity of congruence and of the Kubo-Ando geometric mean.  Positive
 homogeneity is immediate.  Since \(R\) and \(S\) commute,
 \[
 R^2\#S^2=RS,
 \]
 and therefore
 \[
 \Phi_R(I)=R^2+S^2+2RS=(R+S)^2=I.
 \]
 
 It remains to prove trace-subpreservation.  Put
 \[
 P=RCR,
 \qquad
 Q=SCS.
 \]
 Since \(R\) and \(S\) commute, the matrix
 \[
 Y=SR^{-1}=R^{-1}S
 \]
 is positive definite.  It satisfies \(YPY=Q\).  Indeed, although \(C\) need not
 commute with \(R\) and \(S\),
 \[
 (SR^{-1})(RCR)(SR^{-1})
 =SCRSR^{-1}
 =SC(RSR^{-1})
 =SCS.
 \]
 Hence, by the uniqueness of the positive solution of \(XPX=Q\),
 \[
 P^{-1}\#Q=SR^{-1}.
 \]
 Consequently,
 \[
 \begin{aligned}
 	P\natural Q
 	&=(P^{-1}\#Q)^{1/2}P(P^{-1}\#Q)^{1/2}        \\
 	&=(SR^{-1})^{1/2}RCR(SR^{-1})^{1/2}           \\
 	&=(RS)^{1/2}C(RS)^{1/2}.
 \end{aligned}
 \]
 Thus
 \[
 \tr(P\natural Q)=\tr(RSC).
 \]
 By \eqref{eq:kubo-spectral-trace},
 \[
 \tr(P\#Q)\le\tr(P\natural Q)=\tr(RSC).
 \]
 Therefore
 \[
 \begin{aligned}
 	\tr\Phi_R(C)
 	&=\tr(RCR)+\tr(SCS)+2\tr(RCR\#SCS)       \\
 	&\le \tr(R^2C)+\tr(S^2C)+2\tr(RSC)       \\
 	&=\tr\bigl((R+S)^2C\bigr)=\tr C.
 \end{aligned}
 \]
 Lemma~\ref{lem:nonlinear-kyfan-contraction} gives the desired weak
 majorization.
 \end{proof}
 The nonlinear pinching principle gives the corresponding comparison when the
 spectral cross term is replaced by the Kubo-Ando midpoint geometric mean.  For
 \(a,b\ge0\), set
 \[
 H^\#_{a,b}(A,B)
 :=
 a^2A+b^2B+2ab(A\#B).
 \]
 
 \begin{theorem}
 \label{thm:direct-heron-wasserstein}
 Let $A,B\in\Pn$ and let $a,b\ge0$.  Then
 \[
 \lambda\bigl(H^\#_{a,b}(A,B)\bigr)
 \prec_w
 \lambda\bigl(W_{a,b}(A,B)\bigr).
 \]
 Consequently,
 \[
 \ui{H^\#_{a,b}(A,B)}
 \le
 \ui{W_{a,b}(A,B)}
 \]
 for every unitarily invariant norm.
 \end{theorem}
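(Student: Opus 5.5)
We reduce the statement to the nonlinear pinching estimate of Theorem~\ref{thm:nonlinear-pinching-majorization} via the Riccati parametrization of Lemma~\ref{lem:riccati-parametrization}. The degenerate cases are immediate: if \(a=0\) or \(b=0\), the two expressions coincide. If \(a+b>0\) we may also normalize, since both sides are homogeneous of degree two in \((a,b)\). So assume \(a,b>0\).

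Put \(X=A^{-1}\#B\), so that \(XAX=B\). By \eqref{eq:W-Riccati-form},
\[
W_{a,b}(A,B)=(aI+bX)A(aI+bX).
\]
Define
\[
C:=W_{a,b}(A,B),\qquad R:=a(aI+bX)^{-1},\qquad S:=bX(aI+bX)^{-1}.
\]
Then \(C\in\Pn\). Moreover \(R\) and \(S\) are commuting positive definite functions of \(X\) with \(R+S=I\), so \(0<R<I\).

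We now check that
\[
RCR=a^2A,\qquad SCS=bX A\,bX=b^2XAX=b^2B .
\]
The second identity uses that \((aI+bX)^{-1}\) commutes with \(X\). The Kubo-Ando mean is jointly homogeneous, so
\[
RCR\#SCS=(a^2A)\#(b^2B)=ab\,(A\#B).
\]
Consequently
\[
\Phi_R(C)=a^2A+b^2B+2ab(A\#B)=H^\#_{a,b}(A,B).
\]

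Theorem~\ref{thm:nonlinear-pinching-majorization} then gives \(\lambda(\Phi_R(C))\prec_w\lambda(C)\), which is exactly the asserted weak majorization \(\lambda(H^\#_{a,b}(A,B))\prec_w\lambda(W_{a,b}(A,B))\). The unitarily invariant norm inequality follows by Ky Fan dominance, since both matrices are positive semidefinite.

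The only step needing care is the identification \(RCR=a^2A\) and \(SCS=b^2B\) despite \(C\) not commuting with \(X\). It works because \(R\) and \(S\) are chosen to cancel the outer factors \(aI+bX\) exactly on each side. No trace or mean comparison is needed beyond what Theorem~\ref{thm:nonlinear-pinching-majorization} already contains.
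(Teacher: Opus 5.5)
Your proposal is correct and is essentially the paper's own proof. It uses the same choices $C=(aI+bX)A(aI+bX)$, $R=a(aI+bX)^{-1}$, $S=bX(aI+bX)^{-1}$, the same cancellation giving $RCR=a^2A$ and $SCS=b^2B$ via commutativity of $X$ with $aI+bX$, and the same appeal to Theorem~\ref{thm:nonlinear-pinching-majorization} followed by Ky Fan dominance (the aside about normalizing $a+b$ is harmless but unnecessary).
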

 \begin{proof}
 The result is immediate if $a=0$ or $b=0$.  Assume $a,b>0$ and put
 \[
 X=A^{-1}\#B,
 \qquad
 T=aI+bX.
 \]
 By Lemma~\ref{lem:riccati-parametrization},
 \[
 W_{a,b}(A,B)=TAT.
 \]
 Set
 \[
 C=TAT,
 \qquad
 R=aT^{-1},
 \qquad
 S=bXT^{-1}.
 \]
 Since \(X\) commutes with \(T=aI+bX\), we have
 \[
 R,S>0,
 \qquad
 RS=SR,
 \qquad
 R+S=I.
 \]
 In particular, \(0<R<I\) and \(S=I-R\).  Moreover,
 \[
 RCR=a^2T^{-1}(TAT)T^{-1}=a^2A.
 \]
 Again using the commutativity of \(X\) and \(T\),
 \[
 \begin{aligned}
 	SCS
 	&=b^2XT^{-1}(TAT)XT^{-1}        \\
 	&=b^2XATXT^{-1}                 \\
 	&=b^2XA(TXT^{-1})                \\
 	&=b^2XAX=b^2B.
 \end{aligned}
 \]
 By homogeneity of the geometric mean,
 \[
 RCR\#SCS=(a^2A)\#(b^2B)=ab(A\#B).
 \]
 Hence
 \[
 H^\#_{a,b}(A,B)=RCR+SCS+2(RCR\#SCS)=\Phi_R(C).
 \]
 Applying Theorem~\ref{thm:nonlinear-pinching-majorization} to \(C\) and \(R\),
 we obtain
 \[
 \lambda\bigl(H^\#_{a,b}(A,B)\bigr)
 =\lambda\bigl(\Phi_R(C)\bigr)
 \prec_w
 \lambda(C)
 =\lambda\bigl(W_{a,b}(A,B)\bigr).
 \]
 The norm inequality follows from Ky Fan dominance.
 \end{proof}

\begin{proposition}
	\label{prop:kubo-matrix-equality-case}
	Let \(A,B\in\Pn\) and let \(a,b>0\).  Then
	\[
	H^\#_{a,b}(A,B)=W_{a,b}(A,B)
	\]
	if and only if \(A\) and \(B\) commute.
\end{proposition}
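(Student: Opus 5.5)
For the \emph{if} direction I would compute directly. If \(AB=BA\), then \(A\), \(B\) and \(X=A^{-1}\#B=A^{-1/2}B^{1/2}\) are simultaneously diagonalizable. Hence
\[
A\#B=A^{1/2}B^{1/2}=AX=XA=(AB)^{1/2}=(BA)^{1/2},
\]
and the cross terms of \(H^\#_{a,b}\) and \(W_{a,b}\) coincide.

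For the \emph{only if} direction, assume \(a,b>0\) and \(H^\#_{a,b}(A,B)=W_{a,b}(A,B)\). Cancelling \(a^2A+b^2B\) and dividing by \(ab\), and using Lemma~\ref{lem:riccati-parametrization} with \(X=A^{-1}\#B\), the hypothesis becomes
\[
2(A\#B)=AX+XA .
\]
I would conjugate this identity by \(A^{-1/2}\) and introduce
\[
K=A^{-1/2}XA^{1/2}.
\]
\begin{itemize}
\item The right side becomes \(A^{1/2}XA^{-1/2}+A^{-1/2}XA^{1/2}=K^*+K\).
\item The left side becomes \(2(A^{-1/2}BA^{-1/2})^{1/2}\).
\item By the Riccati identity \(XAX=B\),
\[
A^{-1/2}BA^{-1/2}=A^{-1/2}XA\,XA^{-1/2}=(A^{-1/2}XA^{1/2})(A^{1/2}XA^{-1/2})=KK^*,
\]
so the left side equals \(2|K^*|\), where \(|K^*|=(KK^*)^{1/2}\).
\end{itemize}
Thus the equality case is equivalent to the single matrix identity
\[
K+K^*=2|K^*| .
\]

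The key step is to show that this identity forces \(K\ge0\). Taking traces gives \(\operatorname{Re}\tr K=\tr|K^*|\), which is the sum of the singular values of \(K\). Write the polar decomposition \(K=|K^*|V\) with \(V\) unitary, and diagonalize \(|K^*|=\sum_j s_j\,u_ju_j^*\). Then
\[
\operatorname{Re}\tr K=\sum_j s_j\operatorname{Re}\langle Vu_j,u_j\rangle\le\sum_j s_j .
\]
Equality forces \(Vu_j=u_j\) whenever \(s_j>0\). Here \(K\) is invertible, so every \(s_j>0\) and \(V=I\). Hence \(K=|K^*|\ge0\), and in particular \(K\) is Hermitian.

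Finally, \(K=K^*\) reads \(A^{-1/2}XA^{1/2}=A^{1/2}XA^{-1/2}\), which is equivalent to \(XA=AX\). Then \(B=XAX=AX^2\) commutes with \(A\), which proves the claim. I expect the main obstacle to be the reduction of the equality case to the form \(K+K^*=2|K^*|\). In particular, one must choose the right conjugation and recognize \(A^{-1/2}BA^{-1/2}\) as \(KK^*\), rather than trying to use an equality case of \eqref{eq:kubo-spectral-trace}, which the paper does not provide. As a consistency check, in the commuting case \(K=X\ge0\), in agreement with the identity.
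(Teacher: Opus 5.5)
Your proof is correct and follows essentially the same route as the paper. Your $K=A^{-1/2}XA^{1/2}$ is the adjoint of the paper's $Y=A^{1/2}XA^{-1/2}$, so your identity $K+K^*=2|K^*|$ is the paper's $2|Y|=Y+Y^*$, and your trace-plus-polar-decomposition argument forcing $V=I$ is the paper's argument forcing $\operatorname{Re}U=I$ and hence $U=I$.
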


\begin{proof}
	If \(A\) and \(B\) commute, then
	\[
	(AB)^{1/2}=(BA)^{1/2}=A^{1/2}B^{1/2}=A\#B,
	\]
	and hence \(H^\#_{a,b}(A,B)=W_{a,b}(A,B)\).
	
	Conversely, assume that
	\[
	H^\#_{a,b}(A,B)=W_{a,b}(A,B).
	\]
	Since \(a,b>0\), this is equivalent to
	\[
	2(A\#B)=(AB)^{1/2}+(BA)^{1/2}.
	\]
	Put
	\[
	X=A^{-1}\#B.
	\]
	By Lemma~\ref{lem:riccati-parametrization}, \(XAX=B\),
	\[
	(AB)^{1/2}=AX,
	\qquad
	(BA)^{1/2}=XA.
	\]
	Set
	\[
	Y=A^{1/2}XA^{-1/2}.
	\]
	Then
	\[
	AX=A^{1/2}YA^{1/2},
	\qquad
	XA=A^{1/2}Y^*A^{1/2}.
	\]
	Moreover,
	\[
	A^{-1/2}BA^{-1/2}
	=
	A^{-1/2}XAXA^{-1/2}
	=
	Y^*Y,
	\]
	and therefore
	\[
	A\#B=A^{1/2}|Y|A^{1/2}.
	\]
	Thus the equality
	\[
	2(A\#B)=AX+XA
	\]
	is equivalent, after congruence by \(A^{-1/2}\), to
	\[
	2|Y|=Y+Y^*.
	\]
	Let \(Y=U|Y|\) be the polar decomposition of \(Y\).  Since \(Y\) is invertible,
	\(U\) is unitary.  Taking traces in the last identity gives
	\[
	\tr |Y|=\operatorname{Re}\tr(U|Y|).
	\]
	On the other hand,
	\[
	\tr |Y|-\operatorname{Re}\tr(U|Y|)
	=
	\tr\bigl(|Y|^{1/2}(I-\operatorname{Re}U)|Y|^{1/2}\bigr).
	\]
	For a unitary \(U\),
\[
I-\operatorname{Re}U=\frac12(I-U)^*(I-U)\ge0.
\]
Hence the last trace is nonnegative.  Equality forces
\[
|Y|^{1/2}(I-\operatorname{Re}U)|Y|^{1/2}=0.
\]
As \(|Y|>0\), this gives \(\operatorname{Re}U=I\), and the displayed identity
above yields \(U=I\).  Hence \(Y=|Y|\) is Hermitian.
	
	Thus
	\[
	A^{1/2}XA^{-1/2}=A^{-1/2}XA^{1/2},
	\]
	which is equivalent to \(AX=XA\).  Since \(B=XAX\), it follows that \(A\) and
	\(B\) commute.
\end{proof}
 
The direct comparison also has a lower-endpoint consequence which is not part of
weak majorization alone.  This endpoint estimate is independent of weak
majorization, which controls only upper Ky Fan sums.
 
 \begin{proposition}
 \label{prop:direct-endpoints}
 Let $A,B\in\Pn$ and let $a,b\ge0$.  Then
 \[
 \lambda_n^\downarrow\bigl(H^\#_{a,b}(A,B)\bigr)
 \ge
 \lambda_n^\downarrow\bigl(W_{a,b}(A,B)\bigr).
 \]
 Consequently, together with Theorem~\ref{thm:direct-heron-wasserstein},
 \[
 \lambda_1^\downarrow\bigl(H^\#_{a,b}(A,B)\bigr)
 \le
 \lambda_1^\downarrow\bigl(W_{a,b}(A,B)\bigr),
 \qquad
 \tr H^\#_{a,b}(A,B)
 \le
 \tr W_{a,b}(A,B),
 \]
 and
 \[
 \sum_{j=1}^{n-1}\lambda_j^\downarrow
 \bigl(H^\#_{a,b}(A,B)\bigr)
 \le
 \sum_{j=1}^{n-1}\lambda_j^\downarrow
 \bigl(W_{a,b}(A,B)\bigr).
 \]
 \end{proposition}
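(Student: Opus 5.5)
The plan is to reuse the representation from the proof of Theorem~\ref{thm:direct-heron-wasserstein} and to apply only the order-theoretic properties of \(\Phi_R\), not the trace estimate. If \(a=0\) or \(b=0\), then \(H^\#_{a,b}(A,B)=W_{a,b}(A,B)\) and every assertion is trivial, so we assume \(a,b>0\). As in that proof, put \(X=A^{-1}\#B\), \(T=aI+bX\), \(C=TAT=W_{a,b}(A,B)\), \(R=aT^{-1}\) and \(S=bXT^{-1}=I-R\). Then \(0<R<I\) and \(H^\#_{a,b}(A,B)=\Phi_R(C)\).

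Let \(\mu=\lambda_n^\downarrow(C)>0\), so that \(C\ge\mu I\). The map \(\Phi_R\) is order preserving, positively homogeneous and unital; these three properties were checked in the proof of Theorem~\ref{thm:nonlinear-pinching-majorization}. Hence
\[
\Phi_R(C)\ge\Phi_R(\mu I)=\mu\,\Phi_R(I)=\mu I,
\]
so \(\lambda_n^\downarrow(H^\#_{a,b}(A,B))\ge\mu=\lambda_n^\downarrow(W_{a,b}(A,B))\). This is the same lower-barrier step used in Lemma~\ref{lem:nonlinear-kyfan-contraction}, where \(C_t\ge tI\) gave \(\Phi(C_t)\ge tI\).

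The remaining displayed inequalities are then read off from Theorem~\ref{thm:direct-heron-wasserstein}. The Ky Fan inequalities with \(k=1\), \(k=n-1\) and \(k=n\) give the bound on \(\lambda_1^\downarrow\), the bound on the sum of the \(n-1\) largest eigenvalues, and the trace inequality, respectively. Together with the new lower-endpoint bound, this controls both extreme eigenvalues, which weak majorization alone does not.

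The only step needing care is that monotonicity of \(\Phi_R\) is valid on all of \(\Pn\). This follows from monotonicity of congruence and joint monotonicity of the Kubo-Ando mean. I therefore expect no real obstacle.
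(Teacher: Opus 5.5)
Your proof is correct and is essentially the paper's argument. The paper phrases it directly: $W_{a,b}(A,B)\ge I$ gives $A\ge T^{-2}$ and $B\ge XT^{-2}X$, and then monotonicity of $(P,Q)\mapsto a^2P+b^2Q+2ab(P\#Q)$ together with $H^\#_{a,b}(T^{-2},XT^{-2}X)=I$ finishes, with homogeneity handling the scaling. These are exactly your steps ($C\ge\mu I$, monotonicity and homogeneity of $\Phi_R$, and $\Phi_R(I)=I$) transported through $H^\#_{a,b}(A,B)=\Phi_R(C)$; reusing the $\Phi_R$ packaging just spares you from redoing the commuting computation.
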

 
 \begin{proof}
 The assertion is trivial if $a=0$ or $b=0$, so assume $a,b>0$.  Put
 \[
 X=A^{-1}\#B,
 \qquad
 T=aI+bX.
 \]
 Then \(XAX=B\).  Equivalently, using the Riccati identities
 \((AB)^{1/2}=AX\) and \((BA)^{1/2}=XA\), one has
 \[
 W_{a,b}(A,B)=TAT.
 \]
 
 By homogeneity, it is enough to show
 \[
 W_{a,b}(A,B)\ge I
 \quad\Longrightarrow\quad
 H^\#_{a,b}(A,B)\ge I.
 \]
 If \(TAT\ge I\), then congruence by \(T^{-1}\) gives
 \[
 A\ge T^{-2},
 \qquad
 B=XAX\ge XT^{-2}X.
 \]
 The map
 \[
 (P,Q)\mapsto a^2P+b^2Q+2ab(P\#Q)
 \]
 is monotone in both variables.  Hence
 \[
 H^\#_{a,b}(A,B)
 \ge
 H^\#_{a,b}(T^{-2},XT^{-2}X).
 \]
 Since \(T=aI+bX\) commutes with \(X\),
 \[
 T^{-2}\#XT^{-2}X=XT^{-2}.
 \]
 Therefore
 \[
 \begin{aligned}
 	H^\#_{a,b}(T^{-2},XT^{-2}X)
 	&=a^2T^{-2}+b^2X^2T^{-2}+2abXT^{-2}        \\
 	&=T^{-2}(aI+bX)^2=I.
 \end{aligned}
 \]
 Thus \(W_{a,b}(A,B)\ge I\) implies \(H^\#_{a,b}(A,B)\ge I\).  Let
 \[
 m=\lambda_n^\downarrow\bigl(W_{a,b}(A,B)\bigr)>0.
 \]
 By homogeneity,
 \[
 W_{a,b}(A/m,B/m)=m^{-1}W_{a,b}(A,B)\ge I.
 \]
 Applying the implication just proved to $A/m$ and $B/m$ gives
 \[
 m^{-1}H^\#_{a,b}(A,B)
 =H^\#_{a,b}(A/m,B/m)
 \ge I.
 \]
 Therefore
 \[
 \lambda_n^\downarrow\bigl(H^\#_{a,b}(A,B)\bigr)
 \ge
 \lambda_n^\downarrow\bigl(W_{a,b}(A,B)\bigr).
 \]
 The remaining displayed inequalities follow from
 Theorem~\ref{thm:direct-heron-wasserstein}.
 \end{proof}
 
 The same comparison remains valid if the Kubo-Ando cross term is given any
 nonnegative coefficient up to the Heron coefficient \(2ab\).
 
 \begin{remark}
 The same weak-majorization comparison holds with \(2ab(A\#B)\) replaced by
 \(c(A\#B)\) for every \(0\le c\le2ab\).  Indeed,
 \[
 a^2A+b^2B+c(A\#B)\le H^\#_{a,b}(A,B),
 \]
 and the claim follows from Weyl monotonicity and
 Theorem~\ref{thm:direct-heron-wasserstein}.  The coefficient \(2ab\) is sharp
 already for \(A=B=1\).
 \end{remark}

\subsection{Incomparability of the two Heron expressions}
\label{subsec:heron-incomparability}
 
The two Heron expressions are not directly comparable in weak majorization.  The spectral result
and the Kubo-Ando result both compare their respective Heron expressions with
\(W_{a,b}(A,B)\), but neither Heron expression weakly majorizes the other in
general.  Thus the Kubo-Ando comparison above cannot be obtained from the
spectral comparison by ordering the two Heron cross terms inside the same
additive expression.

\begin{proposition}
	\label{prop:heron-expressions-incomparable}
	There is no universal weak-majorization comparison, in either direction, between
	the Kubo-Ando and spectral Heron expressions.  This failure occurs already for
	\(a=b=1\): neither
	\[
	\lambda\bigl(A+B+2(A\#B)\bigr)
	\prec_w
	\lambda\bigl(A+B+2(A\natural B)\bigr)
	\]
	nor
	\[
	\lambda\bigl(A+B+2(A\natural B)\bigr)
	\prec_w
	\lambda\bigl(A+B+2(A\#B)\bigr)
	\]
	holds for all \(A,B\in\Pn\).
\end{proposition}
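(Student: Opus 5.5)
The plan is to give two explicit $2\times2$ counterexamples with $a=b=1$, one for each direction. For both, I will use the Riccati parametrization from the proof of Theorem~\ref{thm:sharp-spectral-heron-wasserstein}: take $R=\diag(1,r)$ with $r>0$ and $C=\begin{pmatrix}1&\rho\\ \rho&1\end{pmatrix}$ with $0<\rho<1$, and set $A=R^{-1}CR^{-1}$ and $B=RCR$. Then $A\natural B=C$, so the spectral Heron expression is completely explicit:
\[
A+B+2(A\natural B)=R^{-1}CR^{-1}+RCR+2C .
\]
For $A\#B$, I will use the closed $2\times2$ formula
\[
A\#B=\frac{\sqrt{\alpha\beta}}{\sqrt{\det(\alpha A+\beta B)}}\,(\alpha A+\beta B),
\qquad \alpha=\sqrt{\det B},\ \beta=\sqrt{\det A}.
\]
Here $\det A=r^{-2}(1-\rho^2)$ and $\det B=r^{2}(1-\rho^2)$, so everything reduces to elementary functions of $r$ and $\rho$.

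For the second displayed non-inclusion, that $\lambda(A+B+2(A\natural B))\prec_w\lambda(A+B+2(A\#B))$ fails, I only need the Ky Fan sum with $k=n$, namely the trace. The traces differ by $2\bigl(\tr(A\natural B)-\tr(A\#B)\bigr)$. This quantity is $\ge0$ by \eqref{eq:kubo-spectral-trace}, so it suffices to show it is strictly positive for one noncommuting pair, for instance $r=2$ and $\rho=1/2$. I can either compute this directly from the formula above, or argue abstractly. For the abstract route, I would first check whether \cite{GanLiuTam2021} characterizes when $\tr(A\#B)=\tr(A\natural B)$; if it does not, I will simply use the explicit numbers.

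For the first non-inclusion, that $\lambda(A+B+2(A\#B))\prec_w\lambda(A+B+2(A\natural B))$ fails, the trace cannot help, since it goes the wrong way. I must instead exhibit an example in which the largest eigenvalue of the Kubo-Ando expression strictly exceeds that of the spectral expression. The heuristic is as follows. The spectral expression is majorized by $W_{1,1}$ (Corollary~\ref{cor:spectral-spreading}), so it is spectrally compressed. The Kubo-Ando term $A\#B$, by contrast, need not be compressed in the same way, because $\lambda(A\#B)\prec_{\log}\lambda(A\natural B)$ controls only products and not how the top eigenvector aligns with $A+B$. I would search the one-parameter regime $r\to\infty$ with $\rho$ fixed, and also the regime $\rho\to1$. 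In these regimes $A+B$ is dominated by the entry of size $r^2$ in $RCR$, and I will compare the $(2,2)$-direction contributions $2(A\#B)_{22}$ and $2C_{22}=2$ through a first-order eigenvalue expansion. If this asymptotic comparison does not produce the needed sign, I will try a numerical search over $(r,\rho)$ and, if necessary, non-real or $3\times3$ examples, and then certify the chosen instance by computing both $\lambda_1$ values exactly via the $2\times2$ eigenvalue formula $\tfrac12\bigl(\tr+\sqrt{\tr^2-4\det}\bigr)$.

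The main obstacle is this first direction. Nothing proved earlier in the paper forces the strict inequality, so the proof rests on actually locating an instance and certifying it with exact arithmetic. The second direction is routine once strict trace inequality is confirmed.
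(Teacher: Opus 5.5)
Your argument for the second non-inclusion is correct and is essentially the paper's. For $r=2$, $\rho=\tfrac12$ your formula gives $A\#B=\tfrac{2}{\sqrt7}\begin{pmatrix}5/4&1/2\\ 1/2&5/4\end{pmatrix}$, so $\tr(A\#B)=5/\sqrt7<2=\tr(A\natural B)$, and the Ky Fan inequality with $k=n$ fails. The paper does the same with $A=\diag(1,4)$, obtaining $40/\sqrt{73}<5$.

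The genuine gap is the first non-inclusion. You correctly identify it as the real content, but you only describe a search and exhibit no instance. Worse, the search space you commit to first (two-by-two examples, real or non-real) provably contains none: for $n=2$ the weak majorization $\lambda(A+B+2(A\#B))\prec_w\lambda(A+B+2(A\natural B))$ always holds.

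To see this, conjugate so that $R=\diag(r_1,r_2)$. Then apply a diagonal unitary commuting with $R$ so that $C=\begin{pmatrix}c_1&\rho\\ \rho&c_2\end{pmatrix}$ with $\rho\ge0$. Your own formula gives
\[
A\#B=\kappa\begin{pmatrix}tc_1&2\rho\\ 2\rho&tc_2\end{pmatrix},
\qquad
t=r_1/r_2+r_2/r_1\ge2,
\qquad
\kappa^2=\frac{c_1c_2-\rho^2}{c_1c_2t^2-4\rho^2},
\]
so $\kappa\le\tfrac12$ and $\kappa t\le1$. Write $M_{\natural}$ and $M_{\#}$ for the two Heron expressions. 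Then
\[
M_{\natural}-M_{\#}=\begin{pmatrix}2c_1(1-\kappa t)&2\rho(1-2\kappa)\\ 2\rho(1-2\kappa)&2c_2(1-\kappa t)\end{pmatrix}
\]
is entrywise nonnegative. Also $M_{\#}$ has a nonnegative off-diagonal entry, hence a top unit eigenvector $v\ge0$. Therefore
\[
\lambda_1(M_{\#})=v^TM_{\natural}v-v^T(M_{\natural}-M_{\#})v\le\lambda_1(M_{\natural}),
\]
and the trace handles $k=2$. Consistently, in your unit-diagonal family the regime $r\to\infty$ gives $\lambda_1(M_{\natural})-\lambda_1(M_{\#})\to2-2\sqrt{1-\rho^2}>0$, the wrong sign.

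So a counterexample must have $n\ge3$, which is only your fallback. There, your plan to certify with the explicit $2\times2$ eigenvalue formula is unavailable, because $A\#B$ is then typically irrational. The ideas you are missing are the paper's two devices:
\begin{enumerate}[label=(\roman*)]
\item Pick a rational $R$ and set $B=R^2AR^2$. Then $A\natural B=RAR$ is exactly rational.
\item Since only a lower bound on $\lambda_1(M_{\#})$ is needed, replace $A\#B$ by a rational $G\le A\#B$. This is certified through the maximal characterization $\begin{pmatrix}A&G\\ G&B\end{pmatrix}\ge0$, equivalently $B-GA^{-1}G\ge0$, checked by Sylvester's criterion.
\end{enumerate}
With $A=\diag(1,20,40)$ and the paper's explicit $R$, this yields $\lambda_1(M_{\#})\ge\lambda_1(A+B+2G)>41>\lambda_1(M_{\natural})$.
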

 
 \begin{proof}
The first example below is certified by exact rational computations.  Let
 	\[
 	A=\diag(1,20,40)
 	\]
 	and
 	\[
 	R=\frac1{20}
 	\begin{pmatrix}
 		42&-4&2\\
 		-4&6&2\\
 		2&2&1
 	\end{pmatrix}.
 	\]
 	The leading principal minors of \(R\) are
 	\[
 	\frac{21}{10},\qquad
 	\frac{59}{100},\qquad
 	\frac{3}{2000},
 	\]
 	so \(R>0\).  Put
 	\[
 	X=R^2,\qquad B=XAX.
 	\]
 	Then \(X>0\), \(B>0\), and \(XAX=B\).  Hence, by the Riccati characterization,
 	\(X=A^{-1}\#B\), and because \(R=X^{1/2}\),
 	\[
 	A\natural B=X^{1/2}AX^{1/2}=RAR.
 	\]
 	For reference,
 	\[
 	B=
 	\begin{pmatrix}
 		\frac{129153}{5000} & -\frac{4119}{1250} & \frac{4521}{5000}\\
 		-\frac{4119}{1250} & \frac{6219}{10000} & -\frac{723}{20000}\\
 		\frac{4521}{5000} & -\frac{723}{20000} & \frac{2511}{40000}
 	\end{pmatrix}.
 	\]
 	
 	We shall prove the strict separation
 	\begin{equation}
 		\label{eq:clean-rational-separation-additive-obstruction}
 		\lambda_1^\downarrow\bigl(A+B+2(A\#B)\bigr)
 		>
 		41
 		>
 		\lambda_1^\downarrow\bigl(A+B+2(A\natural B)\bigr).
 	\end{equation}
 	For the lower bound, set
 	\[
 	G=\frac1{10000}
 	\begin{pmatrix}
 		50041&-6271&1796\\
 		-6271&19846&7265\\
 		1796&7265&3009
 	\end{pmatrix}.
 	\]
 	We use the standard block-matrix maximal characterization of the geometric mean
(see, e.g., \cite[Chapter~4]{Bhatia2007}):
 	\[
 	A\#B
 	=
 	\max\left\{
 	Z=Z^*:
 	\begin{pmatrix}A&Z\\ Z&B\end{pmatrix}\ge0
 	\right\},
 	\]
 	where the maximum is taken with respect to the Loewner order.  Thus it is
 	enough to certify
 	\[
 	\begin{pmatrix}A&G\\G&B\end{pmatrix}\ge0.
 	\]
 	Since \(A>0\), this is equivalent to
 	\[
 	K:=B-GA^{-1}G\ge0.
 	\]
 	A direct exact calculation gives the leading principal minors
 	\[
 	\begin{aligned}
 		\Delta_1(K)
 		&=
 		\frac{1538228131}{2\cdot10^9},\\
 		\Delta_2(K)
 		&=
 		\frac{36854830002529581}{8\cdot10^{18}},\\
 		\Delta_3(K)
 		&=
 		\frac{249208941796751}{2\cdot10^{26}}.
 	\end{aligned}
 	\]
 	Since \(K\) is Hermitian, these positive leading principal minors imply
 	\(K>0\) by Sylvester's criterion.  Consequently \(G\le A\#B\).
 	
 	Put
 	\[
 	M_G=A+B+2G.
 	\]
 	Then
 	\[
 	\det(41I-M_G)
 	=
 	-\frac{2296964316553}{10^{12}}<0.
 	\]
 	Since \(M_G\) is Hermitian, the matrix \(41I-M_G\) is Hermitian.  Its
 	negative determinant implies that \(41I-M_G\) has a negative eigenvalue, and
 	therefore
 	\[
 	\lambda_1^\downarrow(M_G)>41.
 	\]
 	Since \(G\le A\#B\), Weyl monotonicity gives
 	\[
 	\lambda_1^\downarrow\bigl(A+B+2(A\#B)\bigr)
 	\ge
 	\lambda_1^\downarrow(M_G)
 	>
 	41.
 	\]
 	
 	It remains to prove the upper bound in
 	\eqref{eq:clean-rational-separation-additive-obstruction}.  Since
 	\(A\natural B=RAR\), put
 	\[
 	M_\natural=A+B+2RAR.
 	\]
 	A direct exact calculation gives the leading principal minors of
 	\(41I-M_\natural\):
 	\[
 	\begin{aligned}
 		\Delta_1(41I-M_\natural)
 		&=
 		\frac{14747}{5000},\\
 		\Delta_2(41I-M_\natural)
 		&=
 		\frac{139973371}{10000000},\\
 		\Delta_3(41I-M_\natural)
 		&=
 		\frac{3328064679}{2\cdot10^{10}}.
 	\end{aligned}
 	\]
 	Since \(41I-M_\natural\) is Hermitian, these positive leading principal minors
 	imply \(41I-M_\natural>0\) by Sylvester's criterion.  Therefore	
 	\[
 	\lambda_1^\downarrow\bigl(A+B+2(A\natural B)\bigr)
 	=
 	\lambda_1^\downarrow(M_\natural)
 	<
 	41.
 	\]
Combining the two inequalities proves
\eqref{eq:clean-rational-separation-additive-obstruction}.  Hence
\[
\lambda\bigl(A+B+2(A\#B)\bigr)
\prec_w
\lambda\bigl(A+B+2(A\natural B)\bigr)
\]
fails already for \(k=1\).
For the opposite direction, it is enough to use a \(2\times2\) example and the
trace.  Let
\[
A=\diag(1,4),
\qquad
X=
\begin{pmatrix}
	1&1/2\\
	1/2&1
\end{pmatrix},
\qquad
B=XAX.
\]
Then
\[
B=
\begin{pmatrix}
	2&5/2\\
	5/2&17/4
\end{pmatrix},
\]
and \(X>0\), \(B>0\), \(XAX=B\).  Hence \(X=A^{-1}\#B\), and
\[
A\natural B=X^{1/2}AX^{1/2}.
\]
Thus
\[
\tr(A\natural B)=\tr(AX)=5.
\]

It remains to compute \(\tr(A\#B)\).  Put
\[
D=A^{-1/2}BA^{-1/2}
=
\begin{pmatrix}
	2&5/4\\
	5/4&17/16
\end{pmatrix}.
\]
Then
\[
\tr D=\frac{49}{16},
\qquad
\det D=\frac{9}{16}.
\]
For a \(2\times2\) positive definite matrix \(D\),
\[
D^{1/2}
=
\frac{D+\sqrt{\det D}\,I}
{\sqrt{\tr D+2\sqrt{\det D}}}.
\]
This follows from the Cayley-Hamilton identity, since
\[
\bigl(D+\sqrt{\det D}\,I\bigr)^2
=
\bigl(\tr D+2\sqrt{\det D}\bigr)D.
\]
Since \(\sqrt{\det D}=3/4\), we get
\[
D^{1/2}
=
\frac{D+\frac34 I}{\frac{\sqrt{73}}4}.
\]
Therefore
\[
\begin{aligned}
	\tr(A\#B)
	&=
	\tr\bigl(A^{1/2}D^{1/2}A^{1/2}\bigr)       \\
	&=
	\tr(AD^{1/2})                               \\
	&=
	\frac{\tr(AD)+\frac34\tr A}{\frac{\sqrt{73}}4}
	=
	\frac{40}{\sqrt{73}}.
\end{aligned}
\]
Since \(40/\sqrt{73}<5\), we have
\[
\tr(A\#B)<\tr(A\natural B).
\]
Consequently,
\[
\begin{aligned}
	\tr\bigl(A+B+2(A\#B)\bigr)
	&<
	\tr\bigl(A+B+2(A\natural B)\bigr).
\end{aligned}
\]
Thus
\[
\lambda\bigl(A+B+2(A\natural B)\bigr)
\prec_w
\lambda\bigl(A+B+2(A\#B)\bigr)
\]
cannot hold, because weak majorization at \(k=n\) would imply the opposite trace
inequality.
\end{proof}
 
This incomparability explains why the Kubo-Ando Heron-Wasserstein comparison
is not a consequence of the spectral Heron-Wasserstein comparison.  Although
\(A\#B\) and \(A\natural B\) are related by log-majorization and other
multiplicative comparisons, such information is not stable under adding the
common term \(a^2A+b^2B\).

 \section{The two-variable Bhatia-Lim-Yamazaki Heron inequality}
 \label{sec:bly}
 
 The nonlinear pinching theorem also yields a weak-majorization strengthening of
 the two-variable Heron inequality considered by Bhatia, Lim, and Yamazaki.  The
 additional ingredient is a lifting lemma for weak majorization under quadratic
 congruence.
 
 \begin{lemma}
 \label{lem:quadratic-lifting-weak-majorization}
 Let $C,D\in\Pn$.  If
 \[
 \lambda(D)\prec_w\lambda(C),
 \]
 then
 \[
 \lambda(C^{1/2}DC^{1/2})
 \prec_w
 \lambda(C^2).
 \]
 \end{lemma}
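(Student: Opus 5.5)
The plan is to pass through singular values of a product, use Horn's log-majorization for products, and finish with an Abel summation that exploits the hypothesis $\lambda(D)\prec_w\lambda(C)$.

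\textbf{Step 1: rewrite the eigenvalues as singular values.} Put $Z=C^{1/2}D^{1/2}$. Then
\[
C^{1/2}DC^{1/2}=ZZ^*,
\]
so the $j$-th largest eigenvalue of $C^{1/2}DC^{1/2}$ equals $s_j(Z)^2$.

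\textbf{Step 2: apply Horn's inequality.} Horn's multiplicative inequality for singular values of products (see \cite{Bhatia1997}) gives
\[
\prod_{j=1}^k s_j(C^{1/2}D^{1/2})\le\prod_{j=1}^k s_j(C^{1/2})\,s_j(D^{1/2}),
\qquad 1\le k\le n,
\]
with equality for $k=n$ because both sides then equal $|\det|$. Squaring, and writing $c_j=\lambda_j^\downarrow(C)$ and $d_j=\lambda_j^\downarrow(D)$, we obtain
\[
\lambda(C^{1/2}DC^{1/2})\prec_{\log}(c_1d_1,\ldots,c_nd_n).
\]
The vector on the right is already decreasing, since both sequences are positive and decreasing. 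Log-majorization implies weak majorization, as recalled in Section~\ref{sec:preliminaries}. Hence, for each $k$,
\[
\sum_{j=1}^k\lambda_j^\downarrow(C^{1/2}DC^{1/2})\le\sum_{j=1}^k c_jd_j .
\]

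\textbf{Step 3: bound $\sum_{j\le k}c_jd_j$ by $\sum_{j\le k}c_j^2$ using the hypothesis.} Let $D_j=\sum_{i\le j}d_i$ and $C_j=\sum_{i\le j}c_i$, with $D_0=C_0=0$. Abel summation gives
\[
\sum_{j=1}^k c_jd_j=\sum_{j=1}^{k-1}(c_j-c_{j+1})D_j+c_kD_k .
\]
All coefficients $c_j-c_{j+1}$ and $c_k$ are nonnegative, and the hypothesis gives $D_j\le C_j$ for every $j$. Replacing each $D_j$ by $C_j$ therefore does not decrease the sum, and reversing the Abel summation turns the result into $\sum_{j=1}^k c_j^2=\sum_{j=1}^k\lambda_j^\downarrow(C^2)$. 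Combining with Step 2 yields
\[
\lambda(C^{1/2}DC^{1/2})\prec_w\lambda(C^2).
\]

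\textbf{Main obstacle.} The only delicate point is the correct use of Horn's inequality, together with the log-to-weak passage. The key observation is that the ordering of the comparison vector $(c_jd_j)$ is automatic, so no rearrangement issues arise. The remaining Abel-summation step is elementary.
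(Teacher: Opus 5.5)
Your proof is correct. Its second half is exactly the paper's: the Abel summation turns $\sum_{j\le k}c_jd_j$ into $\sum_{j\le k}c_j^2$, using $D_j\le C_j$ and the nonnegative coefficients $c_j-c_{j+1}$ and $c_k$.

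The difference is in how you reach the intermediate bound $\sum_{j\le k}\lambda_j^\downarrow(C^{1/2}DC^{1/2})\le\sum_{j\le k}c_jd_j$. The paper argues additively. It fixes a rank-$k$ projection $E$ and puts $F=C^{1/2}EC^{1/2}$, so that $0\le F\le C$ and $\rank F\le k$. It then applies von Neumann's trace inequality $\tr(FD)\le\sum_{j\le k}\lambda_j^\downarrow(F)\lambda_j^\downarrow(D)$ together with Weyl monotonicity $\lambda_j^\downarrow(F)\le\lambda_j^\downarrow(C)$, and maximizes over $E$ by Ky Fan's principle. You instead argue multiplicatively: you write $C^{1/2}DC^{1/2}=ZZ^*$ with $Z=C^{1/2}D^{1/2}$, invoke Horn's product inequality for singular values, and then use $\prec_{\log}\Rightarrow\prec_w$.

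What each route buys:
\begin{itemize}
\item Your route proves the stronger intermediate statement $\lambda(C^{1/2}DC^{1/2})\prec_{\log}(c_1d_1,\ldots,c_nd_n)$, and it is very short once Horn's theorem is granted.
\item The paper's route needs only the more elementary von Neumann trace inequality and eigenvalue monotonicity. It also stays within the Ky Fan variational framework used in Lemmas~\ref{lem:schur-subcorrelation} and~\ref{lem:nonlinear-kyfan-contraction}.
\end{itemize}
Because the hypothesis $\lambda(D)\prec_w\lambda(C)$ is only additive, the final Abel step cannot use your extra log-majorization strength. Both arguments therefore yield exactly the stated conclusion.
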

 
 \begin{proof}
 Fix $1\le k\le n$, and let $E$ be an orthogonal projection of rank $k$.  Set
 \[
 F=C^{1/2}EC^{1/2}.
 \]
 Then $0\le F\le C$ and $\rank F\le k$.  By von Neumann's trace inequality,
 \[
 \tr(FD)
 \le
 \sum_{j=1}^k\lambda_j^\downarrow(F)\lambda_j^\downarrow(D).
 \]
 Since $0\le F\le C$, the min-max principle gives
 \[
 \lambda_j^\downarrow(F)\le\lambda_j^\downarrow(C),
 \qquad 1\le j\le k.
 \]
 Since \(D\ge0\), the numbers \(\lambda_j^\downarrow(D)\) are nonnegative.
 Therefore
 \[
 \tr(FD)
 \le
 \sum_{j=1}^k\lambda_j^\downarrow(C)\lambda_j^\downarrow(D).
 \]
 Put
 $
 x_j=\lambda_j^\downarrow(C),
 y_j=\lambda_j^\downarrow(D).
 $
 The assumption $\lambda(D)\prec_w\lambda(C)$ says that
 \[
 \Delta_m:=\sum_{j=1}^m(y_j-x_j)\le0,
 \qquad 1\le m\le n.
 \]
 Since $x_1\ge\cdots\ge x_n\ge0$, Abel summation gives
 \[
 \begin{aligned}
 	\sum_{j=1}^k x_j(y_j-x_j)
 	&=x_k\Delta_k+
 	\sum_{m=1}^{k-1}(x_m-x_{m+1})\Delta_m
 	\le0.
 \end{aligned}
 \]
 Hence
 \[
 \sum_{j=1}^k\lambda_j^\downarrow(C)\lambda_j^\downarrow(D)
 \le
 \sum_{j=1}^k\lambda_j^\downarrow(C)^2
 =
 \sum_{j=1}^k\lambda_j^\downarrow(C^2).
 \]
 Thus, for every rank-$k$ projection $E$,
\[
\tr\bigl(E C^{1/2}DC^{1/2}\bigr)
=\tr(FD)
\le
\sum_{j=1}^k\lambda_j^\downarrow(C^2).
\]
 Maximizing over all such $E$ gives the desired Ky Fan inequalities.
 \end{proof}
 
 \begin{corollary}
 \label{cor:quadratically-lifted-nonlinear-pinching}
 Let $C\in\Pn$, and let $R,S\in\Pn$ commute and satisfy $R+S=I$.  Then
 \[
 \lambda\Bigl(C^{1/2}\bigl(RCR+SCS+2(RCR\#SCS)\bigr)C^{1/2}\Bigr)
 \prec_w
 \lambda(C^2).
 \]
 \end{corollary}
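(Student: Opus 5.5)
The corollary should follow by composing Theorem~\ref{thm:nonlinear-pinching-majorization} with Lemma~\ref{lem:quadratic-lifting-weak-majorization}. Set
\[
D=RCR+SCS+2(RCR\#SCS)=\Phi_R(C),
\]
where the second equality uses \(S=I-R\). Once we know \(D\in\Pn\) and \(\lambda(D)\prec_w\lambda(C)\), Lemma~\ref{lem:quadratic-lifting-weak-majorization} applied to the pair \(C,D\) gives \(\lambda(C^{1/2}DC^{1/2})\prec_w\lambda(C^2)\). That is exactly the claim.

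The first step is to check that the hypotheses of Theorem~\ref{thm:nonlinear-pinching-majorization} hold. We have \(R,S\in\Pn\) and \(R+S=I\), so \(R=I-S<I\). Together with \(R>0\), this gives \(0<R<I\), and then \(S=I-R\) is automatic. Commutativity is also automatic here, since \(S=I-R\) commutes with \(R\); the stated hypothesis is just consistent with the setting of the theorem. With these facts, the theorem yields \(\lambda(\Phi_R(C))\prec_w\lambda(C)\).

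The second step is to confirm that \(D\) is positive definite, which the lemma requires. Both \(RCR\) and \(SCS\) lie in \(\Pn\) because \(R\), \(S\), and \(C\) are invertible. Their geometric mean \(RCR\#SCS\) is therefore also in \(\Pn\), and hence so is \(D\).

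There is no real obstacle in this argument, because both ingredients are already proved. The only points needing care are the small verifications above: that \(R+S=I\) with \(R,S>0\) forces \(0<R<I\), and that \(D\in\Pn\), so that Lemma~\ref{lem:quadratic-lifting-weak-majorization} applies verbatim.
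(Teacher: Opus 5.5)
Your proposal is correct and matches the paper's proof: apply Theorem~\ref{thm:nonlinear-pinching-majorization} to obtain $\lambda(D)\prec_w\lambda(C)$, then apply Lemma~\ref{lem:quadratic-lifting-weak-majorization} to this $D$. Your extra checks, that $R+S=I$ with $R,S>0$ forces $0<R<I$ and that $D\in\Pn$, are right and spell out hypotheses the paper leaves implicit.
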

 
 \begin{proof}
 By Theorem~\ref{thm:nonlinear-pinching-majorization},
 \[
 \lambda\bigl(RCR+SCS+2(RCR\#SCS)\bigr)\prec_w\lambda(C).
 \]
 Apply Lemma~\ref{lem:quadratic-lifting-weak-majorization} with
 $D=RCR+SCS+2(RCR\#SCS)$.
 \end{proof}
 
 We now obtain the two-variable Heron form of the question considered by
 Bhatia, Lim, and Yamazaki.  The result is proved in the stronger
 weak-majorization form.
 
 \begin{theorem}
 \label{thm:bly-heron-weak-majorization}
 
 Let $A,B\in\Pn$ and let $a,b\ge0$.  Then
 \[
 \lambda\bigl(a^2A+b^2B+2ab(A\#B)\bigr)
 \prec_w
 \lambda\bigl(a^2A+b^2B
 +ab(A^{1/2}B^{1/2}+B^{1/2}A^{1/2})\bigr).
 \]
 Consequently,
 \[
 \ui{a^2A+b^2B+2ab(A\#B)}
 \le
 \ui{(aA^{1/2}+bB^{1/2})^2}
 \]
 for every unitarily invariant norm.  In particular,
 \[
 \ui{A+B+2(A\#B)}
 \le
 \ui{A+B+A^{1/2}B^{1/2}+B^{1/2}A^{1/2}}.
 \]
 \end{theorem}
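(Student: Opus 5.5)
The plan is to realize the left-hand side \emph{exactly} as a quadratic congruence of a nonlinear pinching, and then apply Corollary~\ref{cor:quadratically-lifted-nonlinear-pinching}. First dispose of the trivial cases: if \(a=0\) or \(b=0\), both sides coincide. Assume \(a,b>0\), write \(P=A^{1/2}\), \(Q=B^{1/2}\), and set
\[
C=aP+bQ\in\Pn .
\]
Expanding the square gives \(C^2=a^2A+b^2B+ab(A^{1/2}B^{1/2}+B^{1/2}A^{1/2})\). So the right-hand side of the first display in Theorem~\ref{thm:bly-heron-weak-majorization} is \(\lambda(C^2)\), and the norm form \(\ui{(aA^{1/2}+bB^{1/2})^2}\) is the same object.

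Next, I would define
\[
R=aC^{-1/2}PC^{-1/2},\qquad S=bC^{-1/2}QC^{-1/2}.
\]
Both are positive definite, and \(R+S=C^{-1/2}(aP+bQ)C^{-1/2}=I\). Since \(S=I-R\), the matrices \(R\) and \(S\) automatically commute, so the hypotheses of Corollary~\ref{cor:quadratically-lifted-nonlinear-pinching} hold; note \(0<R<I\) as well.

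The key identification uses \(C^{1/2}RC^{1/2}=aP\) and \(C^{1/2}SC^{1/2}=bQ\), which give
\[
C^{1/2}(RCR)C^{1/2}=(C^{1/2}RC^{1/2})^2=a^2A,\qquad C^{1/2}(SCS)C^{1/2}=b^2B.
\]
By congruence invariance of the Kubo--Ando mean, \(Z^*(P_1\#Q_1)Z=(Z^*P_1Z)\#(Z^*Q_1Z)\) for invertible \(Z\). Together with homogeneity, this yields
\[
C^{1/2}(RCR\#SCS)C^{1/2}=(a^2A)\#(b^2B)=ab(A\#B).
\]
Hence
\[
C^{1/2}\bigl(RCR+SCS+2(RCR\#SCS)\bigr)C^{1/2}=a^2A+b^2B+2ab(A\#B).
\]
Corollary~\ref{cor:quadratically-lifted-nonlinear-pinching} then gives the weak majorization by \(\lambda(C^2)\) directly. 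The unitarily invariant norm statement follows from Ky Fan dominance, since both matrices are positive semidefinite. The special case \(a=b=1\) is the displayed Bhatia--Lim--Yamazaki inequality.

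The main obstacle is finding the right choice of \(C\), \(R\), \(S\). Once one notices that the condition \(R+S=I\) is forced by taking \(C=aA^{1/2}+bB^{1/2}\), and that commutativity is then free because \(S=I-R\), the only thing left to check carefully is the congruence identity for the geometric mean. That identity is the standard transformer equality for Kubo--Ando means.
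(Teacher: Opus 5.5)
Your proposal is correct and follows the paper's proof essentially step for step. You take \(C=aA^{1/2}+bB^{1/2}\) and \(R,S\) as the congruences \(C^{-1/2}(aA^{1/2})C^{-1/2}\) and \(C^{-1/2}(bB^{1/2})C^{-1/2}\), identify the left-hand side via the transformer equality for \(\#\), and apply Corollary~\ref{cor:quadratically-lifted-nonlinear-pinching} followed by Ky Fan dominance, exactly as the paper does with its \(T\), \(R\), and \(S\).
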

 
 \begin{proof}
 The assertion is immediate if $a=0$ or $b=0$.  Assume $a,b>0$ and put
 \[
 Y=aA^{1/2},
 \qquad
 Z=bB^{1/2},
 \qquad
 T=Y+Z.
 \]
 Then $T\in\Pn$.  Define
 \[
 R=T^{-1/2}YT^{-1/2},
 \qquad
 S=T^{-1/2}ZT^{-1/2}.
 \]
 Then $R,S>0$ and $R+S=I$, so $R$ and $S$ commute.  Moreover,
 \[
 Y=T^{1/2}RT^{1/2},
 \qquad
 Z=T^{1/2}ST^{1/2}.
 \]
 Therefore
 \[
 Y^2=T^{1/2}RTRT^{1/2},
 \qquad
 Z^2=T^{1/2}STST^{1/2}.
 \]
 By congruence invariance of the Kubo-Ando geometric mean,
 \[
 Y^2\#Z^2
 =T^{1/2}\bigl(RTR\#STS\bigr)T^{1/2}.
 \]
 Since
 \[
 Y^2=a^2A,
 \qquad
 Z^2=b^2B,
 \qquad
 Y^2\#Z^2=ab(A\#B),
 \]
 it follows that
 \[
 \begin{aligned}
 	a^2A+b^2B+2ab(A\#B)
 	&=Y^2+Z^2+2(Y^2\#Z^2)                              \\
 	&=T^{1/2}\bigl(RTR+STS+2(RTR\#STS)\bigr)T^{1/2}.
 \end{aligned}
 \]
 Applying Corollary~\ref{cor:quadratically-lifted-nonlinear-pinching} with
 $C=T$, we obtain
 \[
 \lambda\bigl(a^2A+b^2B+2ab(A\#B)\bigr)
 \prec_w
 \lambda(T^2).
 \]
 Finally,
 \[
 T^2=(aA^{1/2}+bB^{1/2})^2.
 \]
 The norm inequality follows from Ky Fan dominance.
 \end{proof}
 
 \begin{remark}
 \label{rem:semidefinite-extension-scope}
By replacing \(A,B\) with \(A+\varepsilon I,B+\varepsilon I\), applying the
theorem to the positive definite pair, and then letting
\(\varepsilon\downarrow0\), the same conclusion extends to positive
semidefinite matrices.  This passage to the limit is justified by continuity of
the square root, of the Kubo-Ando geometric mean, and of Ky Fan sums.  The
theorem proves the two-variable Heron form in the weak-majorization sense.  It
should not be stated as a solution of the full multivariable power-mean problem
without a separate reduction.
 \end{remark}

\end{document}